\documentclass[numbers,webpdf,imaiai]{ima-authoring-template}

\usepackage{upgreek}
\usepackage{booktabs}
\usepackage{makecell}
\usepackage{float}
\usepackage[nameinlink]{cleveref}
\hypersetup{hidelinks,
  pdftitle={A Two-level FSI Method for Generalized Hermitian Interior Eigenvalues},
  pdfauthor={ }}

\crefname{section}{section}{sections}
\crefname{subsection}{subsection}{subsections}

\theoremstyle{thmstyletwo}
\newtheorem{theorem}{Theorem}[section]
\newtheorem{lemma}[theorem]{Lemma}

\theoremstyle{thmstylethree}
\newtheorem{definition}[theorem]{Definition}
\newtheorem{remark}[theorem]{Remark}
\newtheorem{assumption}[theorem]{Assumption}

\crefname{assumption}{Assumption}{Assumptions}
\crefname{hypothesis}{Hypothesis}{Hypotheses}
\crefname{fact}{Fact}{Facts}
\crefname{example}{Example}{Examples}
\numberwithin{equation}{section}

\DeclareMathOperator{\divergence}{div}

\DeclareMathOperator{\diam}{diam}

\makeatletter
\providecommand{\theHALG@line}{\thealgorithm.\arabic{ALG@line}}
\def\@maketitle{%
  \let\footnote\thanks
  \clearemptydoublepage
    \checkoddpage\ifcpoddpage\setlength{\aboveskipchk}{-28.6pt}\else\setlength{\aboveskipchk}{-28.6pt}\fi%for checking oddpage or evenpage top skip%%
  \vspace*{\aboveskipchk}%
  \vspace{\dropfromtop}%
    \vspace*{21pt}
  \if@twocolumn\else\vspace*{-10pt}\fi
    \vspace*{21pt}\par%
  \hbox to \textwidth{%
  \parbox[t]{\titlepagewd}{%
    \ifx\@subtitle\@empty%
    \else%
    {\sffamilyfontcn\fontsize{14}{21}\selectfont\raggedright \@subtitle \par}%
    \vspace{7.5\p@}%
    \fi%
        {\fontsize{12bp}{14pt}\selectfont\bfseries\leftskip0pt plus1fill\rightskip0pt plus1fill {\@title} \par}%%
    \vspace{12\p@}%
        {\fontsize{10bp}{12}\selectfont\scshape\leftskip0pt plus1fill\rightskip0pt plus1fill \@author \par}%
    \vspace{4\p@}
    \vspace{-10\p@}
        {\fontsize{10bp}{12}\selectfont\leftskip0pt plus1fill\rightskip0pt plus1fill\@corresp \par}%
    \vspace{10\p@}
        {\fontsize{10bp}{12pt}\selectfont\leftskip0pt plus1fill\rightskip0pt plus1fill  \par}
    \vspace*{10.5pt}
    \begingroup
    \parindent=0pt
    {\fontsize{9bp}{11}\selectfont\leftskip18pt\rightskip18pt\@abstract\par}
    \endgroup
    }%
  }
  \vspace{13.5\p@}%
  \ifx\@boxedtext\@empty\else\vspace*{12pt}\par\fi%
  \@boxedtext
  \vspace{12\p@ plus 6\p@ minus 6\p@}%
  \vspace{\extraspace}
  \vspace*{-18pt}}
\def\ps@opening{\let\@oddfoot\@empty\let\@evenfoot\@empty
  \let\@oddhead\@empty\let\@evenhead\@empty}
\newcommand{\MSCcodes}[1]{\g@addto@macro\@abstract{\par\vspace*{8pt}%
  {\fontsize{9bp}{11}\selectfont\textit{AMS subject classifications:} #1\par}}}
\makeatother

\begin{document}
\title[A TLFSI Method for Generalized Hermitian Interior Eigenvalues]{A Two-level Filtered Subspace Iterative Method
  for Generalized Hermitian Interior Eigenvalue Problems}
  
% ============================================================
% 作者信息模板（搜索“作者信息模板”即可定位）
% 按姓氏的拉丁字母顺序填写四位作者；顺序不表示贡献大小。
% 第三、第四位作者为通讯作者，以 * 标注。
% 数字上标仅对应单位；同单位作者可共用编号，并删除重复单位行。
% ============================================================
\author{%
  % 作者 1：替换为姓名（名在前，姓在后）
  Qi Li,
  % 作者 2：替换为姓名
  Shi Shu,\\[3pt]
  % 作者 3：通讯作者，保留 *
  Xiaofei Xu\textsuperscript{*}
  and
  % 作者 4：通讯作者，保留 *
  Juan Zhang\textsuperscript{*}
  % 本模板的 \address 放在 \author 内部，才能随作者信息一起显示。
  {\normalfont\address{%
    % 四位作者共同的单位：院系、学校、城市、邮编、国家
    School of Mathematics and Computational Science,
    Hunan Key Laboratory for Computation and Simulation in Science and Engineering, Key Laboratory of Intelligent Computing and Information Processing of Ministry of Education, Xiangtan University, Xiangtan 411105, Hunan, China
  }}%
}
% 通讯邮箱：分别替换邮箱及括号中的姓名，与第三、第四位作者保持一致。
\corresp[*]{Corresponding authors:\\
  \texttt{LittleFFFEI@126.com} (Xiaofei Xu);\\
  \texttt{zhangjuan@xtu.edu.cn} (Juan Zhang).}
% 其他作者邮箱（如投稿要求提供，可填写后按需加入单位信息）：
% 作者 1：author1@example.com
% 作者 2：author2@example.com
% 页眉：替换为四位作者的姓氏，顺序与上面一致。
\authormark{Li, Shu, Xu and Zhang}
% PDF 属性中的作者信息：替换为完整姓名，顺序与上面一致。
\hypersetup{pdfauthor={Qi Li, Shi Shu,
  Xiaofei Xu, Juan Zhang}}
% ==================== 作者信息模板结束 ====================

\abstract{We propose a two-level filtered subspace iterative (TLFSI)
method for computing all eigenpairs whose eigenvalues in the interval of interest $[a,b]$ after finite element discretization of self-adjoint PDE eigenvalue problems.
The method combines a two-level approach with a modern filtered subspace iteration framework.
In this paper, we use contour integral filtering as an example.
The two-level method constructs a coarse space according to the spectral location of the eigenvalues of interest and obtains from it an approximation to the target invariant subspace.
We prove that the distance between the approximate and target subspaces depends only on the separation between the spectral region of interest and the remainder of the spectrum, together with the coarse mesh size \(H\).
Numerical experiments on benchmark elasticity problems confirm the theoretical predictions.}
\keywords{interior eigenvalue problem, rational
filter, subspace iteration, domain decomposition
% linear elasticity
}
\MSCcodes{65N25, 65N30, 65N55, 74B05}
\maketitle

% ============================================================
%  1. INTRODUCTION
% ============================================================
\section{Introduction}\label{sec:intro}

Many problems in computational science and engineering require
computing eigenvalues and eigenvectors of partial differential
equation (PDE) eigenvalue problems.
Typical examples include vibration analysis of elastic structures,
electromagnetic cavity resonance, and quantum mechanical systems.
For elliptic differential eigenvalue problems, standard finite element discretizations lead to generalized eigenvalue problems with symmetric positive definite matrix pairs.
% After finite element discretization, such problems reduce to
% finite-dimensional variational eigenvalue problems of the form
% \begin{equation}\label{eq:gep}
%   a(v_h,w_h)=\lambda_h b(v_h,w_h)
%   \qquad\forall w_h\in V_h,
% \end{equation}
% where \(V_h\) is the finite element space and \(b(\cdot,\cdot)\) is the inner product.
In many applications, the eigenvalues of interest lie in a prescribed interior interval of the spectrum, giving rise to what is known as an \emph{interior eigenvalue problem}. Given an interval $[a,b]\subset\mathbb{R}$, the goal is to compute all discrete eigenpairs whose eigenvalues in the interval.

For large-scale eigenvalue problems, classical approaches to interior eigenvalue problems include the
shift-and-invert Lanczos method \cite{PaSa82,BaJePl05},
Davidson-type methods \cite{SlVo96}, SS-RR method \cite{sakurai2003projection}  and filtered subspace iteration (FSI) methods \cite{Po09, li2016thick, ni2026adapolysi}.
Among these, filtered subspace iteration has attracted considerable attention because of its natural parallelism and robustness.
The FSI method constructs a filter function $f(x)$, with $f(x) \approx 1$ for $x \in [a,b]$ and $f(x) \approx 0$ for $x \notin [a,b]$. 
Applying the filter to the finite element operator preserves the eigencomponents associated with \([a,b]\) while suppressing the remaining components, thereby enabling their extraction by subspace iteration. 
It is worth noting that such methods require an admissible initial subspace, and most existing algorithms simply use random initial vectors, which are generally not optimal and do not guarantee that every target eigenvector has a nonzero component in the initial subspace.

On the other hand, two-level methods, or more generally multilevel methods, have also been explored for solving eigenvalue problems.
A classical example is the multigrid method \cite{xie2014multigrid}, which employs a hierarchy of successively refined grids so that the approximations on coarse spaces progressively approach the solution of the original problem.
A more recent example is the two-level preconditioned Helmholtz subspace iterative method proposed in \cite{LiXu23}.
Other related works include \cite{LiXu21,WaXu21,ZhHwCa07}. These methods typically exploit a coarse space to accelerate eigenvalue computations on fine discretizations. However, these methods are typically limited to extremal eigenvalue problems.
However, these methods are typically used only for extreme eigenvalue problems.

% On the other hand, domain decomposition (DD) methods have been successfully applied to eigenvalue problems; see, e.g., \cite{LiXu21,WaXu21,ZhHwCa07} for elliptic eigenvalue problems and \cite{LiXu23} for Maxwell eigenvalue problems.
% In particular, Liang and Xu \cite{LiXu23} proposed a two-level preconditioned Helmholtz subspace iterative (PHSI) method for computing the first several nonzero eigenvalues of Maxwell equations.
% The PHSI method is shown to have a convergence rate that is independent of the mesh size \(h\) and of the internal gaps among the target eigenvalues. However, the method is designed for the smallest nonzero eigenvalues. 
% A more recent two-level additive Schwarz method
% addresses eigenvalues in an interval of interest by combining an additive Schwarz preconditioner with the Jacobi--Davidson framework, but it requires repeated construction of preconditioners during the iteration.

In this paper, we propose a two-level filtered
subspace iterative (TLFSI) method for computing interior eigenvalues of discretized self-adjoint PDE operators. 
% On the one hand, the TLFSI method obtains an improved initial subspace from the coarse space. On the other hand, at each iteration, the filter is applied through complex-shifted variational solves treated with two-level preconditioners.
The TLFSI method obtains an initial subspace from a coarse space that is adaptively constructed according to the spectral location of the eigenvalues of interest and is guaranteed to contain an accurate approximation to the target invariant subspace.
Moreover, we establish convergence with exact filtering and with inexact shifted solves satisfying a residual forcing condition.  
Under filter separation on the discrete spectrum, the subspace contraction factor \(\rho_h\) is determined by the target and complementary filter magnitudes, without requiring a lower bound on the internal gaps among the target eigenvalues.

The remainder of the paper is organized as follows.
The next section introduces the problem of interest and some preliminaries.
The TLFSI algorithm is described in \cref{sec:algorithm}
and its convergence analyzed in \cref{sec:convergence}.
Numerical experiments and conclusions are given in
\cref{sec:applications,sec:conclusions}.

% ============================================================
%  2. MODEL PROBLEMS AND PRELIMINARIES
% ============================================================
\section{Problem and Preliminaries}\label{sec:model}

\subsection{Problem Setting}\label{sec:preliminaries}

We focus on the linear elasticity eigenvalue problem. Let
\(\Omega\subset\mathbb R^3\) be a bounded polyhedral domain, and let
\( n\) denote the outward unit normal. The boundary is decomposed as
\(\partial\Omega=\overline{\Gamma_D}\cup\overline{\Gamma_R}\), where
\(\Gamma_D\) and \(\Gamma_R\) are disjoint relatively open boundary
parts. For a displacement field \( u=(u_1,u_2,u_3)^T\), define
\begin{equation*}%\label{eq:stress-strain}
  \sigma( u)
  =2\mu_L\,\varepsilon( u)
  +\lambda_L\,\divergence( u)\, I,
  \qquad
  \varepsilon( \mathbf{u})
  =\frac12\bigl(\nabla u+(\nabla u)^T\bigr),
\end{equation*}
where
\begin{equation*}%\label{eq:lame}
  \lambda_L=\frac{E\,\upsilon}{(1+\upsilon)(1-2\upsilon)},
  \qquad
  \mu_L=\frac{E}{2(1+\upsilon)},
  \qquad E>0,\quad 0\le\upsilon<\frac12,
\end{equation*}
where \(E\) is the Young's modulus and \(\upsilon\) is the Poisson ratio.
Let \(\uprho\) be a uniformly positive and bounded density. We consider
the elasticity eigenvalue problem
\begin{equation}\label{eq:elasticity-eig}
  \begin{cases}
    -\nabla\cdot\sigma( u)=\lambda\uprho\, u,
      & \text{in }\Omega,\\[4pt]
     u=\mathbf{0},
      & \text{on }\Gamma_D,\\[4pt]
    \sigma( u) n+\eta\, u=\mathbf{0},
      & \text{on }\Gamma_R,
  \end{cases}
\end{equation}
where \(\eta\ge0\).

Let
\[
   \mathbf{H}^1(\Omega):=[H^1(\Omega)]^3,
\qquad
\mathbf{H}_D^1(\Omega)
:=
\{
v\in\mathbf{H}^1(\Omega):
v=\mathbf{0}\text{ on }\Gamma_D
\}.
\]
The trial space \(V\) is determined by the boundary condition. In the
Dirichlet and mixed Dirichlet--Robin cases, \(V= \mathbf{H}_D^1(\Omega)\).
In the pure Robin and pure Neumann cases, \(V= \mathbf{H}^1(\Omega)\). We set
\(\mathcal H= [L^2(\Omega)]^3\) and define
\begin{equation*}%\label{eq:bilinear-a}
  a( u, v)
  :=\int_\Omega\sigma( u):
        \varepsilon( v)\,dx
    +\int_{\Gamma_R}\eta\, u\cdot v\,ds,
  \qquad
  b( u, v)
  :=\int_\Omega\uprho\, u\cdot v\,dx .
\end{equation*}
The weak formulation of \eqref{eq:elasticity-eig} is to find
\((\lambda, u)\in\mathbb R\times V\), with \( u\neq0\), such that
\begin{equation}\label{eq:abstract-pde-eig}
  \begin{cases}
    a( u, v)=\lambda b( u, v) & \forall  v\in V,\\
    \| u\|_b=1,
  \end{cases}
\end{equation}
where \(\| v\|_b=b( v, v)^{1/2}\). When
\(a(\cdot,\cdot)\) is coercive on \(V\), we also write
\(\| v\|_a=a( v, v)^{1/2}\). In the pure Neumann case,
\(a(\cdot,\cdot)\) is positive semidefinite and its kernel is the space
of rigid-body motions. The zero eigenvalues associated with this kernel
are excluded, and the problem is considered on its \(b\)-orthogonal
complement.

% The boundary conditions used in our numerical experiments are summarized
% as follows.
% \begin{itemize}
% \item[(i)] \textbf{Pure Dirichlet boundary condition.}
% Let \(\Gamma_D=\partial\Omega\), \(\Gamma_R=\emptyset\), and
% \(V= H_D^1(\Omega)\). Then \(a(\cdot,\cdot)\) is coercive on \(V\),
% and the corresponding finite element stiffness matrix is symmetric
% positive definite.

% \item[(ii)] \textbf{Pure Robin boundary condition.}
% Let \(\Gamma_D=\emptyset\), \(\Gamma_R=\partial\Omega\), \(\eta>0\), and
% \(V= H^1(\Omega)\). The Robin term removes the rigid-body kernel, so
% \(a(\cdot,\cdot)\) is coercive on \(V\). The corresponding finite element
% stiffness matrix is symmetric positive definite.

% \item[(iii)] \textbf{Pure Neumann boundary condition.}
% Let \(\Gamma_D=\emptyset\), \(\Gamma_R=\partial\Omega\), \(\eta=0\), and
% \(V= H^1(\Omega)\). Then \(a(\cdot,\cdot)\) is positive semidefinite
% on \(V\), with rigid-body motions forming its kernel. The interval of
% interest is therefore chosen away from the zero eigenvalues.

% \item[(iv)] \textbf{Mixed Dirichlet--Robin boundary condition.}
% Let both \(\Gamma_D\) and \(\Gamma_R\) be nonempty, with \(\eta>0\) on
% \(\Gamma_R\), and let \(V= H_D^1(\Omega)\). Then \(a(\cdot,\cdot)\) is
% coercive on \(V\), and the corresponding finite element stiffness matrix
% is symmetric positive definite.
% \end{itemize}

For the coercive cases, define the solution operator
\(T:\mathcal H\to V\) by
\[
  a(Tw, v)=b( w, v)
  \qquad \forall  v\in V,\quad  w\in\mathcal H .
\]
By the compact embedding \(V\hookrightarrow\mathcal H\), the operator
\(T\), viewed as an operator on \(\mathcal H\), is compact. Let
\(\{(\lambda_i, u_i)\}_{i=1}^{\infty}\) denote the eigenpairs of
\eqref{eq:abstract-pde-eig}, ordered such that
\(\lambda_1 \le \lambda_2 \le \cdots\). Then
\[
  T u_i=\lambda_i^{-1} u_i .
\]
Given an interval of interest \([a,b]\subset\mathbb R\), we define the
target invariant subspace by
\[
  \mathcal E
  :=\operatorname{span}\{ u_i:\lambda_i\in[a,b]\}.
\]
We decompose \(V=\mathcal E\oplus\mathcal E^\perp\), where the
orthogonal complement is taken with respect to the inner product used in
the analysis. In the pure Neumann case, the same definitions are applied
after restricting the problem to the \(b\)-orthogonal complement of the
rigid-body kernel.

\subsection{Finite element discretization and eigenvalue problems}
\label{sec:fem}

Let $\{\tau_h\}_{h>0}$ be a family of conforming, boundary-fitted simplicial triangulations of $\Omega$. We assume that the family is shape regular and quasi-uniform, and set
$h:=\max_{K\in\tau_h}\diam(K)$. Let $V_h\subset V$ be a conforming finite element space and set $n_h:=\dim V_h$. 
% For Maxwell problems,
% $V_h$ is a curl-conforming N\'{e}d\'{e}lec edge-element space; 
For elasticity, it is a vector-valued $H^1$-conforming Lagrange space.
The corresponding essential boundary degrees of freedom are set to zero. 
The Galerkin finite element discretization of
\eqref{eq:abstract-pde-eig} on \(V_h\) is to find
$(\lambda^h, u^h)\in\mathbb R\times V_h$, with $ u^h\neq0$, such that
\begin{equation}\label{eq:disc-eig}
  a( u^h, v^h)=\lambda^h b( u^h, v^h)
  \qquad\forall  v^h\in V_h.
\end{equation}
% Exact integration is assumed in the analysis.
Let
\(
  \{(\lambda_j^h, u_j^h)\}_{j=1}^{n_h}
\)
be its eigenpairs, including multiplicities, such that
\[
  a( u_i^h, u_j^h)=\lambda_i^h b( u_i^h, u_j^h)
  =\lambda_i^h\delta_{ij},
\]
where the eigenvalues are ordered as
\(\lambda_1^h\le \lambda_2^h\le\cdots\le \lambda_{n_h}^h\).
Define \(T_h:\mathcal H\to V_h\) by
\[
a(T_h  w, v)=b( w, v)\qquad\forall  v\in V_h,\  w\in\mathcal H.
\]
Due to $T_h$ is symmetric and compact, we have $T_h  u_i^h = (\lambda_i^h)^{-1}  u_i^h,\ i = 1,\cdots, n_h$.
Define \(A_h:V_h\to V_h\) by
\begin{equation*}%\label{eq:disc-operator}
  b(A_h  v^h, w^h)=a( v^h, w^h)
  \qquad \forall  v^h, w^h\in V_h .
\end{equation*}
Decompose $V_h=\mathcal E_h\oplus\mathcal E_h^{\perp},$ where
\(
  \mathcal E_h:=\operatorname{span}\{  u_i^h:i\in\mathcal I_h\}
\)
and $\mathcal E_h^{\perp}$ is the $b$-orthogonal complement $\mathcal E_h$, where \(\mathcal I_h:=\{j:\lambda_j\in [a, b]\}.\)
The \(b\)-orthogonal projector \(P_h:V_h\to\mathcal E_h\) is defined by
\[
  b(P_h v^h, w^h)=b( v^h, w^h)
  \qquad \forall  w^h\in\mathcal E_h,
\]
and \(P_h^\perp:=I-P_h\) is the \(b\)-orthogonal projector onto
\(\mathcal E_h^{\perp}\). 
% Let the filter function \(f:\mathbb{R}\to\mathbb{R}\) be a scalar filter satisfying
% \(
% f(x)>\frac{1}{2},\ x\in[a,b],
% \)
% and
% \(
% |f(x)|<\frac{1}{2},\ x\in\mathbb{R}\setminus[a,b].
% \)
% Define the discrete filter operator \(F_h:V_h\to V_h\) by
% \(
% F_h=f(A_h)=f(T_h^{-1}).
% \)
% It follows that
% \[
% F_hu_i^h=\gamma_i^h u_i^h,
% \]
% where
% \(
% \gamma_i^h=f(\lambda_i^h),
% \ 
% \gamma_1^h\geq\gamma_2^h\geq\cdots\geq\gamma_{n_h}^h.
% \)
% Similarly, define the continuous filter operator \(F:V\to V\) by
% \(
% F=f(T^{-1}).
% \)
% Then
% \[
% Fu_i=\gamma_i u_i,
% \]
% where
% \(
% \gamma_i=f(\lambda_i),
% \
% \gamma_1\geq\gamma_2\geq\cdots,
% \)
% are the eigenvalues of \(F\).

To quantify the distance between two subspaces, we use principal angles.
\begin{definition}{\rm\cite{GoVa13}}
\label{def:subspace-angle}
Let \(\mathcal S\) and \(\mathcal T\) be finite-dimensional subspaces of an inner-product space with the same dimension. Denote by \(P_{\mathcal S}\) and \(P_{\mathcal T}\) the corresponding \(b\)-orthogonal projection operators onto \(\mathcal S\) and \(\mathcal T\), respectively. We denote by
\[
  \theta_b(\mathcal S,\mathcal T):=\sin\Theta(\mathcal S,\mathcal T) = \|P_{\mathcal S} - P_{\mathcal T}\|_b.
\] 
Similarly, \(\theta_a(\mathcal S,\mathcal T)\) can be defined.
\end{definition}

By Theorem 2.5.1 in \cite{golub2013matrix}, we can compute \(\theta_b(\mathcal S,\mathcal T)\) as
\(
\theta_b(\mathcal S,\mathcal T)
=
\|P_{\mathcal S}(I-P_{\mathcal T})\|_b
=
\|(I-P_{\mathcal S})P_{\mathcal T}\|_b .
\)
For certain elliptic eigenvalue problems, approximation properties between the continuous and discrete problems have been established in some works \cite{liang2024,LiXu26}. 
Following the same framework, we present the corresponding approximation properties for the continuous problem \eqref{eq:abstract-pde-eig} and the discrete problem
\eqref{eq:disc-eig}.

\begin{theorem}\label{theo:T_and_Th}
    Let \(\Omega\) be a bounded convex polygonal domain. Then, as \(h\to0\), the eigenvalues \(\lambda_{i}^h,\ldots,\lambda_{i+s}^h\) of the discrete eigenvalue problem \eqref{eq:disc-eig} converge to the corresponding eigenvalues \(\lambda_{i},\ldots,\lambda_{i+s}\) of the continuous eigenvalue problem \eqref{eq:abstract-pde-eig}. Moreover, there exists \(h_0>0\) such that, for any \(0<h<h_0\),
    \begin{equation}\label{lambda_j}
        0\le \lambda_j^h - \lambda_j \le Ch^2,
    \end{equation}
    and
    \begin{equation}\label{theta_b}
       \theta_b(\mathcal{E}, \mathcal{E}_h) \le C h^2,
    \end{equation}
    where \(C\) depends on
    \(
    d=\min\{d^+,d^-\},
    \)
    where \(d^+ = \lambda_{i+s+1} - \lambda_{i+s}\) and \(d^- = \lambda_{i} - \lambda_{i-1}\).    
\end{theorem}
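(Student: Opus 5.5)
The plan is to use the Babu\v{s}ka--Osborn spectral approximation theory for compact self-adjoint operators, applied to $T$ and $T_h$ on $\mathcal H$. The argument has three stages.

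\emph{Operator convergence via Aubin--Nitsche.} First I will show that $\|T-T_h\|_{\mathcal L(\mathcal H)}\le Ch^2$. Note that $T_h=R_hT$, where $R_h:V\to V_h$ is the $a$-orthogonal (Ritz) projection. On a convex polygonal domain the elasticity problem is $H^2$-regular (for the coercive boundary conditions considered), so $\|Tw\|_{H^2}\le C\|w\|_b$. C\'ea's lemma, combined with interpolation estimates for continuous Lagrange elements on the quasi-uniform mesh, then gives $\|(T-T_h)w\|_a\le Ch\|w\|_b$. A duality argument with the same regularity upgrades this to $\|(T-T_h)w\|_b\le Ch^2\|w\|_b$. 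As a consequence, $T_h\to T$ in norm, so the eigenvalues of $T_h$ converge to those of $T$ with multiplicities preserved. This is the first claim, since the target cluster $\lambda_i,\dots,\lambda_{i+s}$ is isolated from $\lambda_{i-1}$ and $\lambda_{i+s+1}$ by the gap $d$.

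\emph{Eigenvalue bounds.} The lower bound $\lambda_j^h\ge\lambda_j$ is immediate from the min--max characterization, because $V_h\subset V$ is conforming. For the upper bound I will use the standard identity, valid for $u\in\mathcal E$ and its Ritz projection,
\[
\lambda_j^h-\lambda_j\le C\sup_{u\in\mathcal E,\ \|u\|_b=1}\|u-R_hu\|_a^2 .
\]
It can be obtained from the min--max principle applied to the space $R_h(\operatorname{span}\{u_1,\dots,u_j\})$, or from Knyazev-type estimates. Combined with $\|u-R_hu\|_a\le Ch\|u\|_{H^2}\le Ch\lambda_{i+s}$, this yields $Ch^2$. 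The condition $h<h_0$ is needed so that $R_h$ is injective on the span, i.e.\ $\|u-R_hu\|_b$ is small relative to $\|u\|_b$.

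\emph{Subspace estimate.} Let $\Gamma$ be a circle in the resolvent set enclosing $\{\lambda_i^{-1},\dots,\lambda_{i+s}^{-1}\}$ at distance of order $d/\lambda_{i+s+1}^2$ from the rest of the spectrum of $T$. Define the spectral projectors
\[
E=\frac{1}{2\pi\mathrm i}\oint_\Gamma(z-T)^{-1}\,dz,\qquad E_h=\frac{1}{2\pi\mathrm i}\oint_\Gamma(z-T_h)^{-1}\,dz .
\]
Then
\[
E-E_h=\frac{1}{2\pi\mathrm i}\oint_\Gamma(z-T)^{-1}(T-T_h)(z-T_h)^{-1}\,dz .
\]
Both resolvents are bounded on $\Gamma$ by $C/\operatorname{dist}(\Gamma,\sigma)$, uniformly for $h<h_0$ once $\|T-T_h\|$ is below half that distance. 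Hence $\|E-E_h\|_b\le C(d)h^2$. Since $T$ and $T_h$ are $b$-self-adjoint, $E$ and $E_h$ are the $b$-orthogonal projections onto $\mathcal E$ and $\mathcal E_h$, and $\theta_b(\mathcal E,\mathcal E_h)=\|E-E_h\|_b$ by \cref{def:subspace-angle}. This also explains why $C$ depends on $d$: it enters through the resolvent bound. Alternatively, I could use the Davis--Kahan $\sin\Theta$ argument with the residual $(T-T_h)$ restricted to $\mathcal E$.

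The main obstacle is the bound $\|T-T_h\|_b\le Ch^2$, specifically the elliptic $H^2$ regularity for elasticity on a convex domain with mixed or Robin conditions, which is delicate near Dirichlet--Robin junctions. I would assume it as the standard hypothesis of \cite{liang2024,LiXu26}. In the pure Neumann case I would work on the $b$-orthogonal complement of the rigid-body motions, where $a$ is coercive by Korn's inequality.
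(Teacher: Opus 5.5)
Your proposal is correct, and its core matches the paper's proof. The paper also treats $\|T-T_h\|_b\le Ch^2$ as the key input and gets $\lambda_j\le\lambda_j^h$ from conforming min--max. It obtains \eqref{theta_b} from the Riesz projections $Z,Z_h$ over a circle enclosing $\mu_i,\dots,\mu_{i+s}$, deferring to the appendix of \cite{liang2024} for exactly the resolvent-difference estimate you wrote out. The differences are in the other two steps.

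\emph{The operator bound.} The paper does not derive $\|T-T_h\|_b\le Ch^2$; it cites \cite{lee2018}. Your C\'ea--Aubin--Nitsche derivation makes explicit the $H^2$-regularity hypothesis hidden in that citation. Your caveat about Dirichlet--Robin junctions applies equally to the paper.

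\emph{The upper eigenvalue bound.} The paper gets this in one line from the operator estimate. Since $T$ and $T_h$ are compact, nonnegative and $b$-self-adjoint on $\mathcal H$, the min--max (Weyl) inequality gives $|\mu_j-\mu_j^h|\le\|T-T_h\|_b\le Ch^2$. Hence $\lambda_j^h-\lambda_j=(\mu_j-\mu_j^h)\lambda_j\lambda_j^h\le C(\lambda_{i+s}^h)^2h^2$. Your Ritz-projection argument is the classical double-order mechanism. It needs only energy-norm approximability of finitely many eigenfunctions, not the full operator bound. However, your subspace step uses $\|T-T_h\|_b\le Ch^2$ anyway, so it buys nothing here, and the paper's route is shorter.

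One small inaccuracy: min--max applied to $R_h(\operatorname{span}\{u_1,\dots,u_j\})$ gives a supremum over $\operatorname{span}\{u_1,\dots,u_j\}$, not over $\mathcal E$. The version with the supremum restricted to $\mathcal E$ is the gap-dependent Babu\v{s}ka--Osborn (or Knyazev--Osborn) cluster estimate, not the simple min--max argument. Both versions give $O(h^2)$, so your conclusion stands.
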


\begin{proof}
Let \(\mu_j=\lambda_j^{-1}\) and
\(\mu_j^h=(\lambda_j^h)^{-1}\).
By the conforming Galerkin min--max principle,
\(\lambda_j\le\lambda_j^h\). Moreover, the min--max principle
for the \(b\)-self-adjoint solution operators \(T\) and \(T_h\),
together with the estimate
\(\|T-T_h\|_b\le Ch^2\) from \cite{lee2018}, yields
\[
    |\mu_j-\mu_j^h|\le \|T-T_h\|_b\le Ch^2.
\]
Hence,
\[
    0\le \lambda_j^h-\lambda_j
    =\frac{\mu_j-\mu_j^h}{\mu_j\mu_j^h}
    \le C (\lambda_{i+s}^h)^2 h^2
    \le C h^2,
    \qquad j=i,\ldots,i+s,
\]
which proves \eqref{lambda_j}.

Let \(\Gamma\) be a positively oriented circle enclosing
\(\mu_i,\ldots,\mu_{i+s}\) and no other points of the spectrum
of \(T\). For sufficiently small \(h\), \(\Gamma\) also encloses
precisely \(\mu_i^h,\ldots,\mu_{i+s}^h\) among the spectral
points of \(T_h\). Define the spectral projections
\[
    Z:=\frac{1}{2\pi\mathrm{i}}
       \int_{\Gamma}(zI-T)^{-1}\,dz,
    \qquad
    Z_h:=\frac{1}{2\pi\mathrm{i}}
       \int_{\Gamma}(zI-T_h)^{-1}\,dz.
\]
Then \(Z\) and \(Z_h\) project onto
\(\mathcal E=\operatorname{span}\{u_i,\ldots,u_{i+s}\}\)
and
\(\mathcal E_h=\operatorname{span}\{u_i^h,\ldots,u_{i+s}^h\}\),
respectively. Combining \(\|T-T_h\|_b\le Ch^2\) with the
argument in the appendix of \cite{liang2024} gives
\eqref{theta_b}.
\end{proof}

\subsection{Contour-integral spectral filter}
\label{sec:contour}
The filter function \(f\) may be constructed as either a polynomial \cite{li2016thick, xu2026adaptive} or a rational \cite{Po09, wang2026shifted} function, with the latter represented as the quotient of two polynomials. Contour-integral filters are among the most widely used rational filters because they provide a high degree of parallelism and extend naturally to generalized eigenvalue problems. This paper focuses on the contour-integral filter function.

For the interval $[a,b]$, we define the contour integral function as
\begin{equation*}%\label{eq:rho-scalar}
  \chi(x)=-\frac{1}{2\pi i}\oint_{\mathcal C}\frac{1}{x-z}\,dz
  =
  \begin{cases}
    1, & x \in \Gamma,\\[4pt]
    0, & x \notin \Gamma,
  \end{cases}
\end{equation*}
where $\mathcal{C}$ encloses the eigenvalues in $[a,b]$ and no other discrete eigenvalues, and \(\Gamma\) denotes the region enclosed by \(\mathcal{C}\). 
The corresponding spectral projector is
\begin{equation*}%\label{eq:contour}
  \chi(A_h)
  =
  -\frac{1}{2\pi i}\oint_{\mathcal C}
  (A_h-zI)^{-1}\,dz .
\end{equation*}
The spectral projector onto \(\mathcal E_h\) satisfies
\[
  \chi(A_h)v_h
  =\sum_{i\in\mathcal I_h} b(v_h,u_i^h)u_i^h,
  \qquad v_h\in V_h.
\]
If the initial subspace \(U^0\) satisfies
\(\dim\chi(A_h)U^0=\dim\mathcal E_h\), then
\(\chi(A_h)U^0=\mathcal E_h\), so exact projection recovers the
target subspace in one step. In practice, the projector is
approximated by discretizing its contour integral representation.
% {\color{blue} For any \(v_h\in V_h\),
% \[
% A_h v_h
% =
% \sum_{i=1}^{n_h}\lambda_i^h\, b(v_h,u_i^h) u_i^h .
% \]
% The spectral projector associated with
% \(\mathcal E_h\) is given by
% \[
% \rho(A_h)v_h
% =
% \sum_{i\in\mathcal I_h} b(v_h,u_i^h) u_i^h ,
% \qquad v_h\in V_h .
% \]
% Therefore, for any initial vector \(v_h^0\in V_h\) whose component in
% \(\mathcal E_h\) is nonzero,
% \(\rho(A_h)v_h^0\in\mathcal E_h\). More generally, for any initial subspace represented by \(U^0=\{u_1^0,\ldots,u_s^0\}\), choose an
% arbitrary enumeration \(\mathcal I_h=\{i_1,\ldots,i_s\}\).  If the
% matrix
% \[
% \bigl(b(u_j^0,u_{i_k}^h)\bigr)_{k,j=1}^s
% \]
% is nonsingular, we have
% \[
% \rho(A_h)U^0 \subset \mathcal E_h,
% \qquad \dim \rho(A_h)U^0=s .
% \]
% Thus, in this ideal case, the subspace iteration converges in one step.
% In practice, however, the action of \(\rho(A_h)\) cannot be computed
% inexpensively and must be approximated. One standard approach is to
% approximate the spectral projector by a discretized contour integral.
% }

The contour integral is approximated by a quadrature rule with \(n_q\)
nodes \(\{z_j\}_{j=1}^{n_q}\) and weights
\(\{w_j\}_{j=1}^{n_q}\).  Define the rational
function
\(
  f_{n_q}(x):=
  \sum_{j=1}^{n_q}\frac{w_j}{x-z_j}
\)
and the filter matrix \(F_h\) by
\begin{equation}\label{eq:quad}
  F_h^{(n_q)}
  :=
  f_{n_q}(A_h)
  =
  \sum_{j=1}^{n_q}
  w_j
  (A_h-z_j I)^{-1} .
\end{equation}

For a fixed quadrature rule, we write \(F_h=F_h^{(n_q)}\) and
\(f=f_{n_q}\) when no ambiguity is possible.
As \(n_q\to\infty\), the quadrature-based rational filter \(f_{n_q}\)
converges pointwise to \(\rho\) on any fixed discrete spectrum separated
from \(\mathcal C\) \cite{Bornemann11}.

We use a contour symmetric about the real axis. A convenient
parameterization is the ellipse
\begin{equation*}%\label{eq:ellipse-contour}
  z(\theta)=c+\alpha\cos\theta+i\beta\sin\theta,
  \qquad
  c=\frac{a+b}{2},\quad
  \alpha=\frac{1+\delta_c}{2}(b-a),\quad
  \beta=\eta\alpha,
\end{equation*}
where $\delta_c>0$ and $\eta\in(0,1]$. The contour must enclose
$[a,b]$ and exclude unwanted eigenvalues. We discretize the upper half
of the contour by a standard quadrature rule, such as Gauss--Legendre or
trapezoidal quadrature \cite{Po09,TaPo14}, and obtain the remaining
nodes and weights by conjugation.
%Zolotarev rules are another alternative \cite{guttel2015zolotarev}. 
For a convergent quadrature family on a fixed contour, the scalar
approximation converges as \(n_q\to\infty\).  Changing \(\beta\) changes both the
filter separation and the conditioning of the shifted systems. The
contour and quadrature parameters used in each experiment are therefore
reported together with the shifted-solver settings.

For a real \(q^h\in V_h\), conjugate pairing makes the filtered
function real.
Number the quadrature nodes so that
$z_j = \bar{z}_{j+n_q/2}$ and
$w_j = \bar{w}_{j+n_q/2}$ for $j = 1,\ldots,n_q/2$, we have
\begin{equation}\label{eq:real-filter}
  F_hq^h
  =
  2\,\operatorname{Re}\Bigl(
  \sum_{j=1}^{n_q/2}
  w_j(A_h-z_jI)^{-1}q^h
  \Bigr).
\end{equation}
Thus one filter application requires \(n_q/2\) complex shifted
variational solves for each real input function.

% ============================================================
%  3. THE TWO-LEVEL TLFSI ALGORITHM
% ============================================================
\section{The TLFSI algorithm}\label{sec:algorithm}

The convergence of filtered subspace iteration relies on the ability of the filter to amplify the components of the approximate subspace associated with the target invariant subspace. For any vector
\[
x=\sum_{j=1}^{n_h}\xi_j  u_j^h\in V_h,
\]
we have the decomposition
\[
\begin{aligned}
y=F_hx
&=\sum_{j=1}^{n_h}\xi_jF_h u_j^h \\
&= \sum_{j \in \mathcal I_h} \xi_j\gamma_j^h  u_j^h + \sum_{j \notin \mathcal I_h} \xi_j\gamma_j^h  u_j^h \\
&= y_1 + y_2,
\end{aligned}
\]
where \(y_1\in\mathcal E_h\) and \(y_2\in\mathcal E_h^\perp\). The component of \(y\) in \(\mathcal E_h\) is determined jointly by the corresponding component of \(x\) and the eigenvalues of \(F_h\). The latter can be controlled through the accuracy of the filter, whereas the former depends on the choice of the initial subspace.

\subsection{Construction of initial subspace}
\label{sec:init-subspace}

Let \(\tau_H=\{\Omega_l\}_{l=1}^N\) be a conforming, shape-regular, and quasi-uniform coarse triangulation of \(\Omega\), and set
\[
  H_l:=\diam(\Omega_l),\qquad
  H:=\max_{1\le l\le N}H_l .
\]
Assume that \(H>h\) and that \(\tau_h\) is obtained by conforming refinement of \(\tau_H\). 
Using compatible finite element families gives
the nested spaces \(V_H\subset V_h\subset V\). Set \(n_H:=\dim V_H\).
The coarse Galerkin problem is to find
\((\lambda^H, u^H)\in\mathbb R\times V_H\), with \( u^H\neq0\), such that
\begin{equation*}%\label{eq:coarse-eig}
  a( u^H, v^H)=\lambda^H b( u^H, v^H)
  \qquad\forall  v^H\in V_H .
\end{equation*}
Define \(T_H: \mathcal{H} \to V_H\) by
\[
  a(T_H w, v)=b( w, v)
  \qquad \forall  v\in V_H,\  w \in \mathcal{H},
\]
and define \(A_H:V_H\to V_H\) by
\[
  b(A_H v^H, w^H)=a( v^H, w^H)
  \qquad \forall  v^H, w^H\in V_H .
\]

Let \(\{(\lambda_j^H, u_j^H)\}_{j=1}^{n_H}\) be the coarse eigenpairs ordered so that
\(\lambda_1^H \le \lambda_2^H \le \cdots \le \lambda_{n_H}^H\).
% Define \(F_H=f(A_H)\), and let the eigenvalues of \(F_H\) be denoted by
% \(\gamma_i^H=f(\lambda_i^H)\), which are ordered as
% \(
% \gamma_1^H\ge\gamma_2^H\ge\cdots\ge\gamma_{n_H}^H .
% \)
% As discussed in Section~\ref{sec:contour}, this ordering places the eigenvalues associated with the interval of interest at the beginning of the ordered sequence. 
Suppose that \(A_h\) has \(e\) eigenvalues in the interval \([a,b]\).
Let \(i\) be the smallest index such that \(\lambda_i^h\in[a,b]\).
By Theorem \ref{theo:T_and_Th}, we have \( u_i^H,\ldots, u_{i+e-1}^H\) can be regarded as approximations to \( u_i^h,\ldots, u_{i+e-1}^h\), respectively. To further improve these approximations in terms of the residual, we perform one Rayleigh--Ritz procedure for \(A_h\) over the subspace
\(
\operatorname{span}\{ u_i^H,\cdots, u_{i+e-1}^H\}.
\)
% In the following section, we provide a theoretical analysis to justify the effectiveness of this choice.

In practice, the exact number \(e\) of eigenvalues in the interval of interest is not available at negligible cost. 
Nevertheless, several inexpensive estimation or eigenvalue-counting techniques can provide a reliable upper bound \cite{di2016efficient, LiSaYa16, xi2018fast}. 
These estimation techniques yield an estimate \(\tilde e \approx e\). We then choose \(\zeta\) such that \(s=\lceil \zeta \tilde e\rceil \ge e,\) and use \(s\) as the dimension of the search subspace.
This choice has two advantages. First, an enlarged subspace improves the robustness and convergence of FSI. 
Second, the starting index of the target eigenvectors is not known a priori, and the coarse eigenvectors should be selected so that, whenever possible, their indices cover those of the target eigenvectors. 

We consider two strategies for selecting the coarse subspace used to approximate the target eigenspace. 
The first strategy is the most robust, as it ensures that the indices of the selected coarse eigenpairs cover those of the target eigenpairs, although at a relatively higher computational cost. 
We first use an inertia count to determine the number of eigenvalues below the left
endpoint \(a\), thereby identifying the index \(i\) of the first
eigenvalue greater than or equal to \(a\). 
We then use an existing eigensolver to compute the \(i\)-th through the \((i+s-1)\)-st
eigenvectors of the coarse eigenvalue problem and take
\[
  \mathcal V_0^{\rm IDX}
  =
  \operatorname{span}\{u_i^H,\ldots,u_{i+s-1}^H\}
\]
as the initial subspace. 
The second strategy avoids identifying the exact target indices and thereby substantially reduces the additional initialization cost. However, it cannot guarantee that the selected coarse eigenpairs cover all indices corresponding to the target eigenpairs.
Since, for sufficiently small mesh sizes, the conforming finite element eigenvalues satisfy \(\lambda_j^H\ge \lambda_j^h\) for all \(j\), the coarse eigenvalues corresponding to the target eigenvalues are shifted toward the high-frequency part of the spectrum. We therefore take \(\ell\) as the first
index such that \(\lambda_\ell^H\ge a\), and use
\[
  \mathcal V_0^{\rm INT}
  =
  \operatorname{span}\{ u_\ell^H,\cdots, u_{\ell+s-1}^H\}
\]
as the initial subspace. 
We denote the complete algorithms based on the first and second strategies by TLFSI-IDX and TLFSI-INT, respectively.
Figure \ref{fig:init-subspace} provides a schematic illustration of the above idea. 

\begin{figure}[!htbp]
    \centering
    \includegraphics[width=0.8\linewidth]{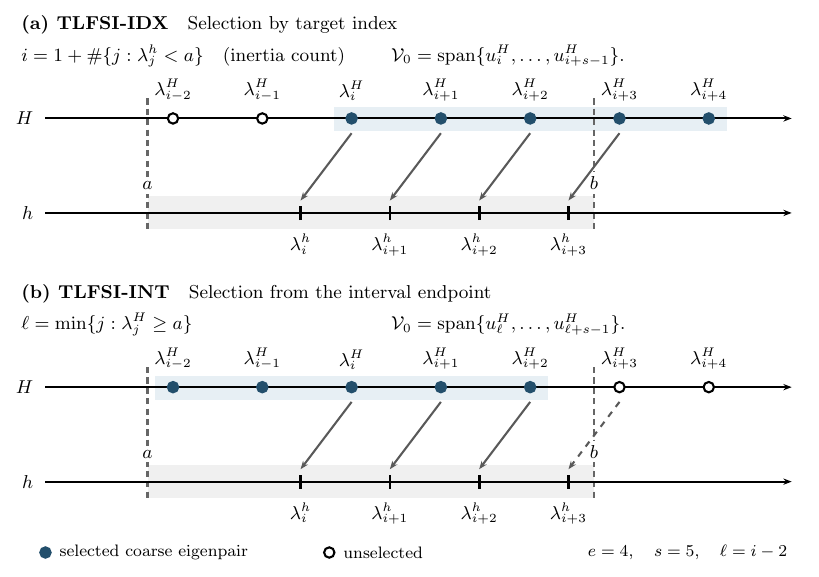}
    \caption{Selection of a coarse-level subspace for approximating the fine-level invariant subspace associated with the interval of interest $[a,b]$.}
    \label{fig:init-subspace}
\end{figure}

\begin{figure}[!htbp]
    \centering
    \includegraphics[width=0.7\linewidth]{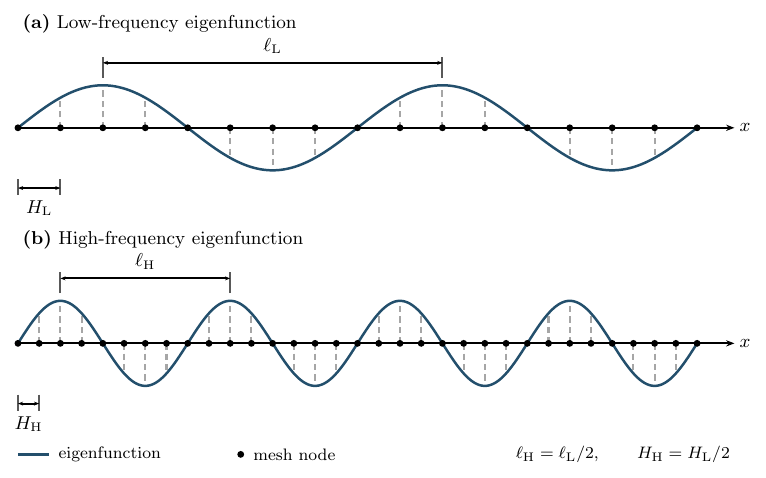}
    \caption{\rm Choice of the coarse mesh size \(H\) relative to \(\ell\) of the eigenfunctions of interest: (a) low-frequency case; (b) high-frequency case.}
    \label{fig:eigen_function}
\end{figure}

\begin{remark}
    The additional cost of TLFSI-IDX mainly arises from the \(LDL^T\) factorization of the original problem, which is performed using MUMPS. Although the matrix to be factorized is real and sparse, the computational cost may still reach \(\mathcal{O}(n_h^{1.5})\). In contrast, the cost of solving the projected subproblem is negligible. A less expensive alternative is to estimate the index \(\ell\) from the eigenvalue problem on the \(2h\) grid. This strategy can substantially reduce the computational cost while retaining robustness, but it remains less efficient than TLFSI-INT. We therefore mention this alternative only briefly.
\end{remark}

\subsection{Selection of $H$}\label{sec:select_H}

For the linear-elastic eigenvalue problem, the eigenvalue \(\lambda\) is related to the natural angular frequency \(\omega\) by \(\lambda=\omega^2\). Hence, the interval of interest \([a,b]\) corresponds to the frequency band \([\sqrt a,\sqrt b]\), and the shortest spatial scale of interest occurs at \(\omega_{\max}=\sqrt b\). In a homogeneous isotropic elastic medium, the shear- and pressure-wave velocities are
\[
c_S=\sqrt{\frac{\mu}{\uprho}},
\qquad
c_P=\sqrt{\frac{\lambda_{\mathrm L}+2\mu}{\uprho}},
\]respectively. Since \(c_S<c_P\), the shear wave has the shorter wavelength at a fixed frequency and therefore determines the spatial resolution required by the coarse mesh \cite{achenbach2012wave}.
Although the eigenfunctions on a bounded domain are standing-wave modes shaped by the geometry and boundary conditions, their local oscillatory scales are governed by the elastic principal symbol. The shortest wavelength associated with the target interval is consequently estimated by
\(
\ell_{\min}
=
\frac{2\pi}{\kappa_S(b)},
\)
where
\(
\kappa_S(b)
=
\frac{\sqrt b}{c_S}
=
\sqrt{\frac{b\uprho}{\mu}}.
\)
Motivated by the elements-per-wavelength criteria used in finite element simulations of elastic waves \cite{geevers2018dispersion}, we select the coarsest nested mesh satisfying
\begin{equation}\label{equ:compute_H}
H\kappa_S(b)\le \tau.    
\end{equation}
Equivalently, the shortest shear wavelength is represented by at least \(2\pi/\tau\) coarse elements. The parameter \(\tau\) is chosen numerically in Section~\ref{sec:applications}. Figure \ref{fig:eigen_function} illustrates that, based on the wavelength \(\ell\) of the eigenfunctions of interest, an appropriate coarse mesh size \(H\) is selected according to \eqref{equ:compute_H}.

\subsection{Implementation}
\label{sec:TLFSI-iteration}

The pseudocode of the algorithm is presented in Algorithm~\ref{alg:solve}. 
Lines~\ref{estimate_eigsnum} and \ref{init} are carried out as described in Sections~\ref{sec:init-subspace} and \ref{sec:select_H}, respectively. 
% However, an excessively small \(H\) may incur substantial overhead in the setup stage. 
% We therefore prevent \(H\) from becoming too small by introducing a threshold \(\text{tol}_H\). 
The setup cost mainly consists of two parts: the construction of the coarse space and the solution of the eigenvalue problem on the coarse space. 
The latter is negligible compared with the former and is computed using the ARPACK \cite{lehoucq1998arpack} routines \texttt{dsaupd} and \texttt{dseupd}.

% In practice, computing the exact number of eigenvalues in the interval of interest is computationally expensive for large-scale eigenvalue problems. Therefore, various algorithms have been developed to provide an estimate of this number \cite{di2016efficient, LiSaYa16, xi2018fast}. Since these algorithms are not the focus of this work, we assume that an estimate \(s\ge e\) is available and use it as the dimension of the initial subspace.
% It is worth noting that, although a larger initial subspace is not required, it can enhance the convergence behavior of the algorithm.
% We retain the \(s\) coarse eigenfunctions whose eigenvalues are closest to the interval of interest, and perform a \(b\)-orthonormal Rayleigh--Ritz extraction on the fine grid. The details of this initialization procedure are provided in Section~\ref{sec:init-subspace}.
In the main iteration loop, the constructed filter is applied to the approximate subspace, followed by an orthogonalization procedure to improve the numerical stability of the algorithm. A Rayleigh--Ritz procedure is then performed on the filtered subspace to accelerate convergence.
Ritz pairs are selected by their Ritz values in \([a,b]\), and convergence is tested by residual functionals of the selected pairs.

We further incorporate implicit deflation and spurious eigenvalue detection techniques into the algorithm. The implicit deflation strategy locks the converged eigenvectors and removes them from the approximate subspace. In each subsequent iteration, the updated approximate subspace is orthogonalized against the locked eigenvectors, see Lines \ref{filtering} and \ref{deflation}.

The spurious eigenvalue detection strategy is based on a relaxed residual threshold \(\text{tol}_s\). 
After some eigenpairs have converged, spurious eigenvalues are detected using \(\text{tol}_s\). Eigenpairs with residuals above \(\text{tol}_s\) are identified as spurious and excluded from the convergence check.

\begin{algorithm}[htbp!]
\caption{The TLFSI method for interior finite element eigenvalues}
\label{alg:solve}
\begin{algorithmic}[1]
\Require Interval \([a,b]\); quadrature data 
\(\{z_j,w_j\}_{j=1}^{n_q/2}\); tolerances $\text{tol}$.
\Ensure Approximate eigenpairs $(\Lambda, U)$.
\State\label{estimate_eigsnum}
% {\color{blue}
% Estimate the number $\ilde e$ of target eigenvalues in \([a,b]\).
% If $\tilde e=0$, return.
% Set the iteration subspace dimension
% $
% s=\left\lceil 1.8\,\widetilde e\right\rceil .
% $
% }
Estimate an upper bound \(s\) for \(e\), the number of target eigenvalues in \([a,b]\). If \(s=0\), return.
\State\label{init}
\(U^0 \gets \mathrm{Initial}([a,b], h, s)\)
(Algorithm~\ref{alg:init}).
\State
Set \(k = 0,\ t = 0\), \(\Lambda = \varnothing\), \( U = \{ 0\}\)
and \(P_\perp=I\).
\Repeat
  \State\label{filtering}
  Solve the eigenvalue problem on \(W^{k+1}\):
  \begin{equation}\label{eq:RR}
    a(u_i^{k+1},v)
    =
    \lambda_i^{k+1}b(u_i^{k+1},v),
    \qquad
    v\in W^{k+1},
    \qquad
    b(u_i^{k+1},u_j^{k+1})=\delta_{ij},
  \end{equation}
  \qquad where
  \[
  W^{k+1}
  =
  \operatorname{span}
  \left\{
      P_\perp F_h U^k
  \right\}.
  \]
  % Set
  % \[
  % U^{k+1}
  % =
  % \operatorname{span}
  % \{u_i^{k+1}\}_{i=1}^{s-t}.
  % \]
  \State\label{deflation}
  \((U^{k+1},U,\Lambda,P_\perp,t,\mathcal I_{k+1})\)
  \(\gets\)
  \(\mathrm{Deflation}
  \big(
  \{\lambda_i^{k+1}\}_{i=1}^{s-t},
  \{u_i^{k+1}\}_{i=1}^{s-t},
  t,\text{tol}
  \big)\)
  (Algorithm~\ref{alg:deflation}).
  \State \(k\gets k+1\).
\Until{
\(\mathcal I_k=\varnothing\)
}
\end{algorithmic}
\end{algorithm}

\begin{algorithm}[htbp!]
\caption{Generate the initial subspace \(U^0\)}
\label{alg:init}
\begin{algorithmic}[1]
\Require Interval \([a,b]\); fine mesh size \(h\); the target count $s$; tolerance $\text{tol}_H$.
\Ensure The initial subspace \(U^0\).
\If{TLFSI-IDX}
  \State \(\ell = \min\{j:\lambda_j^h\ge a\}.\)
\ElsIf{TLFSI-INT}
  \State \(\ell = \min\{j:\lambda_j^H\ge a\}.\)
\EndIf
\State\label{comput_H} Choose the largest \(H\) satisfying \eqref{equ:compute_H}, the mesh-nesting condition and $n_H \ge \ell + s - 1$.
\If{$H > h$}
\State\label{Solve_coarse_prob} Solve
  \[
  a( u_i^H, v)=\lambda_i^Hb( u_i^H, v),\ i\in \mathcal{I}_s,\  v \in V_H,\ \text{and}\ b( u_i^H, u_j^H) =  \delta_{ij},
  \]
  where \(\mathcal I_s = \{\ell,\ell+1,\cdots,\ell+s-1\}\).
  Set \(\mathcal E_H = \text{span}\{  u_j^H \}_{j \in \mathcal I_s}.\)
\State\label{init-coarse} Solve the eigenvalue problem on \(\mathcal E_H\):
\[
    a( u_i^0, v)=\lambda_i^0b( u_i^0, v),\  v \in \mathcal E_H,\ \text{and}\ b( u_i^0, u_j^0) = \delta_{ij}.
\]
Set \( U^0 = \text{span}\{  u_i^0 \}_{i=1}^s. \)
\Else
\State Generate
\(
U^0=\operatorname{span}\{ u_i^0\}_{i=1}^s,
\
 u_i^0 \overset{\mathrm{i.i.d.}}{\sim}\mathcal{N}(0,I).
\)
\EndIf
\end{algorithmic}
\end{algorithm}

\begin{algorithm}[htbp!]
\caption{Deflation of the converged eigenpairs}
\label{alg:deflation}
\begin{algorithmic}[1]
\Require
The eigenpairs
\(\{\lambda_i^{k+1}\}_{i=1}^{s-t}\) and
\(\{u_i^{k+1}\}_{i=1}^{s-t}\);
the locking count \(t\);
tolerance $\text{tol}$.
\Ensure
The updated \(U^{k+1}\), \(U\), \(\Lambda\),
\(P_\perp\), \(t\), and
\(\mathcal I_{k+1}\).
\State
Set
\[
\mathcal J_{k+1}
=
\{i:\lambda_i^{k+1}\in[a,b]\}.
\]
\State
Set \(\mathcal I_{k+1}=\varnothing\).
\For{$i\in\mathcal J_{k+1}$}
\State
Compute the residual
\[
r_i^{k+1}
=
A_hu_i^{k+1}
-
\lambda_i^{k+1}u_i^{k+1}.
\]
\If{
\(\|r_i^{k+1}\|_2<\text{tol}\)
}
\State
Move \(u_i^{k+1}\) from \(U^{k+1}\) to \(U\),
add \(\lambda_i^{k+1}\) to \(\Lambda\),
and set \(t\gets t+1\).
\Else
\State
Add \(i\) to the active index set
\(\mathcal I_{k+1}\).
\EndIf
\EndFor
\State
% {\color{blue}
% Let the locked vectors be
% \[
% U
% =
% \operatorname{span}\{u_1,\ldots,u_t\},
% \qquad
% b(u_i,u_j)=\delta_{ij}.
% \]
Update 
\[
P_\perp v
=
v-\sum_{j=1}^{t} b(v,u_j)u_j, \quad u_j \in U.
\]
If \(t=0\), set \(P_\perp=I\).
% }
\end{algorithmic}
\end{algorithm}

% ============================================================
%  4. CONVERGENCE ANALYSIS
% ============================================================
\section{Convergence analysis}\label{sec:convergence}

Filtered subspace iteration with Rayleigh--Ritz extraction is a standard
framework for rational eigenvalue computations; see, for example,
\cite{TaPo14,GuPoTaVi15,AuTr15,xi2016computing} and the projection
results in \cite[Chapters~4 and~6]{saad2011numerical}.  The estimates
below are stated on the finite element space \(V_h\).

% \subsection{Notation and filter separation}

In this section, we suppose that \(s=e=\dim\mathcal E_h\) and that the target coarse and fine eigenpairs have the same index set.  
% In practice, \(s\) may be an
% upper estimate for the target count. 
The rational filter \(F_h\) is defined by \eqref{eq:quad}. Note that \(\mathcal E_h\) and \(\mathcal E_h^{\perp}\) are invariant under \(F_h\).  
Let
\[
  F_{h,\mathcal E}:=F_h|_{\mathcal E_h},\qquad
  F_{h,\perp}:=F_h|_{\mathcal E_h^{\perp}} .
\]
Define the target and complementary filter magnitudes by
\[
  \alpha_h:=\min_{j\in\mathcal I_h}|f(\lambda_j^h)|,
  \qquad
  \beta_h:=\max_{j\notin\mathcal I_h}|f(\lambda_j^h)|.
\]

\begin{assumption}\label{ass:conv-filter}
For the given finite element space \(V_h\), the filter satisfies
\[
  0\le\beta_h<\alpha_h,
  \qquad
  \rho_h:=\frac{\beta_h}{\alpha_h}<1 .
\]
\end{assumption}

The rational filter is chosen so that its scalar values on the target
eigenvalues dominate those on the complementary spectrum.  For
contour-integral filters this is the usual separation produced by an
off-real contour enclosing the interval; the circular-contour estimates
in \cite[Theorems~3.1--3.2]{TaPo14} give one representative analysis.
Assumption~\ref{ass:conv-filter} records the corresponding
separation on the discrete spectrum in the form used by the graph recurrence.

% The following lemma follows from Assumption~\ref{ass:conv-filter} and the diagonal representation of \(F_h=f(A_h)\) in a \(b\)-orthonormal eigenbasis of \(A_h\).

% \begin{lemma}
% \label{lem:conv-uniform-filter}
% Under Assumption~\ref{ass:conv-filter}, \(F_{h,\mathcal E}\) is invertible and
% \[
%   \|F_{h,\mathcal E}^{-1}\|_b=\alpha_h^{-1},
%   \qquad
%   \|F_{h,\perp}\|_b=\beta_h,
%   \qquad
%   \|F_{h,\perp}\|_b\|F_{h,\mathcal E}^{-1}\|_b=\rho_h<1.
% \]
% \end{lemma}

% \subsection{Approximation of the Initial Subspace to \(\mathcal E_h\)}

\begin{lemma}\label{lem:conv-graph}
Let \( U^k\subset V_h\) be an
\(s\)-dimensional subspace satisfying
\(\theta(U^k,\mathcal E_h)<1\).  Then the restriction
\(P_h|_{ U^k}:U^k\to\mathcal E_h\) is bijective, and \(U^k\) has the graph representation
\[
  U^k
  =
  \{p+G_k^h p:\ p\in\mathcal E_h\},
  \qquad
  G_k^h:=P_h^\perp(P_h|_{U^k})^{-1} .
\]
Moreover,
\[
  \|G_k^h\|_b
  =\tan\Theta(U^k,\mathcal E_h),
  \qquad
  \theta(U^k,\mathcal E_h)
  =
  \frac{\|G_k^h\|_b}{\sqrt{1+\|G_k^h\|_b^2}} .
\]
\end{lemma}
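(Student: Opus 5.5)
The argument takes place entirely in \(V_h\) with the \(b\)-inner product, and \(\theta\) means \(\theta_b\) from Definition~\ref{def:subspace-angle}. We use \(\theta(U^k,\mathcal E_h)=\|(I-P_h)P_{U^k}\|_b=\|P_h^\perp P_{U^k}\|_b\), where \(P_{U^k}\) is the \(b\)-orthogonal projector onto \(U^k\).

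\textbf{Bijectivity.} Since \(\dim U^k=\dim\mathcal E_h=s\), it suffices to show that \(P_h|_{U^k}\) is injective. Suppose \(x\in U^k\) with \(P_hx=0\). Then \(x=P_h^\perp x=P_h^\perp P_{U^k}x\), so \(\|x\|_b\le\theta(U^k,\mathcal E_h)\|x\|_b\). Because \(\theta<1\), this forces \(x=0\). Hence \((P_h|_{U^k})^{-1}:\mathcal E_h\to U^k\) exists, and \(G_k^h\) is well defined as a map \(\mathcal E_h\to\mathcal E_h^\perp\).

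\textbf{Graph representation.} For \(p\in\mathcal E_h\), set \(x=(P_h|_{U^k})^{-1}p\in U^k\). Then
\[
x=P_hx+P_h^\perp x=p+G_k^h p .
\]
Since \(p\) ranges over all of \(\mathcal E_h\), every element of \(U^k\) arises this way, so \(U^k=\{p+G_k^hp:\ p\in\mathcal E_h\}\).

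\textbf{The norm identities.} Let \(g:=\sup_{x\in U^k\setminus\{0\}}\|P_h^\perp x\|_b/\|x\|_b\), which equals \(\|P_h^\perp P_{U^k}\|_b=\theta\). Write each \(x\in U^k\) as \(x=p+G_k^hp\). By \(b\)-orthogonality of \(p\) and \(G_k^hp\),
\[
\|x\|_b^2=\|p\|_b^2+\|G_k^hp\|_b^2,\qquad \|P_h^\perp x\|_b=\|G_k^hp\|_b .
\]
Therefore
\[
\frac{\|P_h^\perp x\|_b^2}{\|x\|_b^2}=\frac{t^2}{1+t^2},\qquad t:=\frac{\|G_k^hp\|_b}{\|p\|_b}.
\]
The map \(t\mapsto t/\sqrt{1+t^2}\) is increasing on \([0,\infty)\), and \(\sup_p t=\|G_k^h\|_b\). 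Taking the supremum gives
\[
\theta=\frac{\|G_k^h\|_b}{\sqrt{1+\|G_k^h\|_b^2}} .
\]
Inverting this relation, \(\|G_k^h\|_b=\theta/\sqrt{1-\theta^2}\). With \(\sin\Theta=\theta\), this is \(\tan\Theta(U^k,\mathcal E_h)\).

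\textbf{Main obstacle.} The only delicate point is justifying \(\theta=\sup_{x\in U^k}\|P_h^\perp x\|_b/\|x\|_b\), i.e. identifying \(\|(I-P_{\mathcal S})P_{\mathcal T}\|_b\) with a supremum over the subspace \(\mathcal T=U^k\). This holds because \(P_{U^k}\) is an orthogonal projector: its range is \(U^k\), and it does not increase norms. The equality of \(\|P_{\mathcal S}-P_{\mathcal T}\|_b\) with this one-sided quantity for equal-dimensional subspaces is taken from the cited result in \cite{golub2013matrix}. Finally, \(\Theta\) is read as the largest principal angle, so that \(\tan\Theta=\sin\Theta/\sqrt{1-\sin^2\Theta}\).
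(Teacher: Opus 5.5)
Your proof is correct and follows the paper's argument step for step: injectivity of $P_h|_{U^k}$ from $\theta<1$ plus the dimension count, the graph representation via $x=P_hx+P_h^\perp x$, and the norm identities from the Pythagorean splitting $\|x\|_b^2=\|p\|_b^2+\|G_k^hp\|_b^2$. The differences are cosmetic: you express $\theta$ as a monotone function of $\|G_k^h\|_b$ and then invert, whereas the paper goes from $\theta$ to $\|G_k^h\|_b$, and you explicitly justify identifying $\|P_h^\perp P_{U^k}\|_b$ with the supremum over $U^k$, which the paper simply takes as the definition.
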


\begin{proof}
By definition, we have
\[
  \theta^k
  := \theta_b(U^k,\mathcal E_h)
  = \sup_{0\ne  u\in U^k}
    \frac{\|P_h^\perp  u\|_b}{\| u\|_b} < 1.
\]
Since \(P_h  u\) and \(P_h^\perp  u\) are \(b\)-orthogonal,
\[
  \|P_h  u\|_b^2
  =
  \|u\|_b^2-\|P_h^\perp  u\|_b^2
  \ge
  (1-(\theta^k)^2)\| u\|_b^2,
  \qquad  u\in U^k.
\]
Thus \(P_h|_{U^k}\) is injective; moreover,
\(\dim U^k=\dim\mathcal E_h=s\), so it is a bijection from
\(U^k\) onto \(\mathcal E_h\).
For \(p\in\mathcal E_h\), let \( u=(P_h|_{ U^k})^{-1}p\). The orthogonal decomposition of
\( u\) yields
\begin{equation}\label{equ:decom_u}
   u=P_h  u+P_h^\perp  u=p+G_k^hp.    
\end{equation}
The arbitrariness of \( u\in U^k\) yields the graph representation of \(U^k\). By \eqref{equ:decom_u} and the bijectivity of \(P_h|_{U^k}\), we obtain
\begin{equation}\label{equ:G_k prop}
  \|G_k^h\|_b
  \sup_{0\ne p\in\mathcal E_h}
  \frac{\|G_k^hp\|_b}{\|p\|_b}
  =
  \sup_{0\ne  u\in U^k}
  \frac{\|P_h^\perp  u\|_b}{\|P_hu\|_b}
  =
  \frac{\theta^k}{\sqrt{1-(\theta^k)^2}}
  =
  \tan\Theta( U^k,\mathcal E_h).
\end{equation}
Similarly, by rewriting \eqref{equ:G_k prop}, we obtain an expression of
\(\theta^k\) in terms of \(\|G_h^k\|_b\).
\end{proof}

\begin{theorem}\label{thm:conv-exact-subspace}
Under Assumption~\ref{ass:conv-filter}, if \(\delta_H:=CH^2<1\),
then the filtered iteration
\[
  U^{k+1}:=F_h(U^k)
  :=\{F_hv:\ v\in U^k\}
\]
is well defined as an \(s\)-dimensional graph over \(\mathcal E_h\)
for all \(k\ge0\). Moreover,
\[
  \|G_k^h\|_b
  \le \rho_h^k\eta_H,
  \qquad
  \theta(U^k,\mathcal E_h)
  \le \rho_h^k\eta_H,
  \qquad
  \eta_H:=
  \frac{\delta_H}{\sqrt{1-\delta_H^2}},
\]
where the contraction factor \(\rho_h\) depends only on the separation between the spectral region of interest and the remainder of the spectrum, together with the coarse mesh size \(H\).
\end{theorem}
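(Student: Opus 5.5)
We argue by induction on \(k\), with the graph representation of Lemma~\ref{lem:conv-graph} as the main tool. There are two pieces: an estimate for the initial subspace \(U^0\), and a one-step contraction of the graph operator under \(F_h\).

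\textbf{Base case.} We show \(\theta_b(U^0,\mathcal E_h)\le\delta_H=CH^2<1\). Since \(U^0\) is obtained by a Rayleigh--Ritz procedure for \(A_h\) on \(\mathcal E_H\), we have \(U^0=\mathcal E_H\) as a subspace. With \(s=e\) and matching index sets, Theorem~\ref{theo:T_and_Th} applies with \(h\) replaced by \(H\) and gives \(\theta_b(\mathcal E,\mathcal E_H)\le CH^2\). The same theorem at level \(h\) gives \(\theta_b(\mathcal E,\mathcal E_h)\le Ch^2\le CH^2\).

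Strictly, the triangle inequality for \(\|P_{\mathcal S}-P_{\mathcal T}\|_b\) only yields \(2CH^2\). We absorb the factor into \(C\), so that \(\delta_H\) denotes the resulting bound. Alternatively, we can repeat the resolvent argument of Theorem~\ref{theo:T_and_Th} directly with \(T_h\) and \(T_H\), using \(\|T_h-T_H\|_b\le\|T-T_h\|_b+\|T-T_H\|_b\le CH^2\).

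Then \(\delta_H<1\) and Lemma~\ref{lem:conv-graph} give \(P_h|_{U^0}\) bijective, with
\[
\|G_0^h\|_b=\theta^0/\sqrt{1-(\theta^0)^2}\le\delta_H/\sqrt{1-\delta_H^2}=\eta_H,
\]
since \(t\mapsto t/\sqrt{1-t^2}\) is increasing on \([0,1)\).

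\textbf{Induction step.} Suppose \(U^k=\{p+G_k^hp:\ p\in\mathcal E_h\}\). Since \(F_h\) commutes with \(P_h\), both \(\mathcal E_h\) and \(\mathcal E_h^\perp\) are invariant, so
\[
F_h(p+G_k^hp)=F_{h,\mathcal E}\,p+F_{h,\perp}G_k^hp .
\]
Assumption~\ref{ass:conv-filter} gives \(\alpha_h>0\), so \(F_{h,\mathcal E}\) is invertible with \(\|F_{h,\mathcal E}^{-1}\|_b=\alpha_h^{-1}\). This follows from the diagonal form of \(F_h=f(A_h)\) in the \(b\)-orthonormal eigenbasis \(\{u_j^h\}\); the same form gives \(\|F_{h,\perp}\|_b=\beta_h\).

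Substituting \(q=F_{h,\mathcal E}\,p\), which ranges over all of \(\mathcal E_h\), yields
\[
U^{k+1}=\{q+F_{h,\perp}G_k^hF_{h,\mathcal E}^{-1}q:\ q\in\mathcal E_h\}.
\]
Hence \(U^{k+1}\) is again an \(s\)-dimensional graph over \(\mathcal E_h\), with
\[
G_{k+1}^h=F_{h,\perp}G_k^hF_{h,\mathcal E}^{-1}
\]
by uniqueness of the graph operator. We may use \(G_{k+1}^h=P_h^\perp(P_h|_{U^{k+1}})^{-1}\) here, because this graph form shows \(P_h|_{U^{k+1}}\) is bijective. Taking norms,
\[
\|G_{k+1}^h\|_b\le\beta_h\alpha_h^{-1}\|G_k^h\|_b=\rho_h\|G_k^h\|_b,
\]
and induction gives \(\|G_k^h\|_b\le\rho_h^k\eta_H\).

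\textbf{Angle bound.} Lemma~\ref{lem:conv-graph} gives
\[
\theta(U^k,\mathcal E_h)=\|G_k^h\|_b/\sqrt{1+\|G_k^h\|_b^2}\le\|G_k^h\|_b\le\rho_h^k\eta_H .
\]
Well-definedness for all \(k\) follows because each \(U^k\) is a graph, so \(\theta(U^k,\mathcal E_h)<1\) and \(\dim U^k=s\).

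The closing remark in the statement about what \(\rho_h\) depends on is interpretive. Of the two quantities, \(\rho_h\) is determined by the filter values on the target versus complementary discrete spectrum, while the prefactor \(\eta_H\) carries the \(H\)-dependence. We will state this explicitly and not attempt to prove more.

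The main obstacle is the base case: making rigorous that \(\mathcal E_H\) approximates \(\mathcal E_h\), not just \(\mathcal E\), with a constant depending only on the gap \(d\). This requires checking that the contour \(\Gamma\) from Theorem~\ref{theo:T_and_Th} separates the spectra of both \(T_H\) and \(T_h\) once \(H\) is small enough. It also requires tracking constants so that the bound can be written as \(\delta_H=CH^2\).
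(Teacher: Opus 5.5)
Your proposal is correct and follows the paper's proof essentially step for step. The base case applies Theorem~\ref{theo:T_and_Th} at levels $h$ and $H$ with the triangle inequality for the gap; the paper also silently absorbs the factor $2$ into $C$ and leaves implicit that $H$ must be small enough for that theorem to apply. Lemma~\ref{lem:conv-graph} then gives $\|G_0^h\|_b\le\eta_H$, and the recursion $G_{k+1}^h=F_{h,\perp}G_k^hF_{h,\mathcal E}^{-1}$ with $\|F_{h,\perp}\|_b\|F_{h,\mathcal E}^{-1}\|_b=\beta_h/\alpha_h=\rho_h$ yields the contraction and the angle bound. Your reading that $\rho_h$ is itself $H$-independent, with the $H$-dependence carried by $\eta_H$, matches the paper's concluding section.
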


\begin{proof}
By Theorem~\ref{theo:T_and_Th}, we have
\[
  \theta_b(\mathcal E_h,\mathcal E)
  \le Ch^2,
  \qquad
  \theta_b(\mathcal E_H,\mathcal E)
  \le CH^2.
\]
Since \(U^0=\mathcal E_H\), the triangle inequality and \(h\le H\)
yield
\[
  \theta(U^0,\mathcal E_h)
  \le
  \theta(\mathcal E_H,\mathcal E)
  +\theta(\mathcal E,\mathcal E_h)
  \le CH^2.
\]
Thus, \(\delta_H<1\) and \cref{lem:conv-graph} imply that \(U^0\)
is a graph over \(\mathcal E_h\), with
\[
  \|G_0^h\|_b
  \le
  \frac{\delta_H}{\sqrt{1-\delta_H^2}}
  =\eta_H.
\]

Suppose that
\[
  U^k=\{p+G_k^hp:\ p\in\mathcal E_h\}.
\]
The invariance of \(\mathcal E_h\) and \(\mathcal E_h^\perp\)
under \(F_h\), together with the bijectivity of
\(F_{h,\mathcal E}\), gives
\[
  U^{k+1}
  =
  \{p+G_{k+1}^hp:\ p\in\mathcal E_h\},
  \qquad
  G_{k+1}^h
  =
  F_{h,\perp}G_k^hF_{h,\mathcal E}^{-1}.
\]
% Hence \(U^{k+1}\) has dimension \(s\), and
% \cref{lem:conv-uniform-filter} gives
% \[
%   \|G_{k+1}^h\|_b
%   \le \rho_h\|G_k^h\|_b.
% \]
Hence \(U^{k+1}\) has dimension \(s\). By the definitions of
\(\alpha_h\) and \(\beta_h\),
\[
  \|F_{h,\mathcal E}^{-1}\|_b=\alpha_h^{-1},
  \qquad
  \|F_{h,\perp}\|_b=\beta_h.
\]
Therefore,
\[
  \|G_{k+1}^h\|_b
  \le
  \|F_{h,\perp}\|_b
  \|G_k^h\|_b
  \|F_{h,\mathcal E}^{-1}\|_b
  \le
  \rho_h\|G_k^h\|_b.
\]
Iterating this inequality yields
\(\|G_k^h\|_b\le\rho_h^k\eta_H\), and hence
\[
  \theta(U^k,\mathcal E_h)
  =
  \frac{\|G_k^h\|_b}{\sqrt{1+\|G_k^h\|_b^2}}
  \le \|G_k^h\|_b
  \le \rho_h^k\eta_H.
\]
\end{proof}

% \subsection{Rayleigh--Ritz extraction and eigenpair errors}

Rayleigh--Ritz extraction on \(U^k\) computes
Ritz pairs \((\lambda_i^k,u_i^k)\in\mathbb R\times U^k\) from \eqref{eq:RR}. This allows different target eigenpairs to be accelerated separately. The following theorem gives error bounds for the Ritz values
and Ritz vectors after incorporating the Rayleigh--Ritz procedure into the iteration.
\begin{theorem}\label{thm:conv-exact-eigenpair}
Under the assumptions stated in \cref{thm:conv-exact-subspace}, let
\(\lambda_i^h\) be a simple target eigenvalue with matched Ritz pair
\((\lambda_i^k,u_i^k)\). Let \(P_k\) be the \(b\)-orthogonal projector
onto \(\mathcal U^k\), and define
\(
  \delta_i^{(k)}
  =
  \min_{\substack{1\le j\le s\\ j\ne i}}
  |\lambda_i^h-\lambda_j^k|,
  \
  \beta^{(k)}
  =
  \|P_kA_h(I-P_k)\|_b .
\)
If \(\delta_i^{(k)}>0\), define
\[
  \kappa_i^{(k)}
  =
  \left(1+\frac{(\beta^{(k)})^2}{(\delta_i^{(k)})^2}\right)^{1/2}.
\]
Then
\begin{align*}
  \theta(u_i^k,u_i^h)
  &\le
  \kappa_i^{(k)}\eta_H\rho_h^k, \\
  |\lambda_i^k-\lambda_i^h|
  &\le
  \|A_h-\lambda_i^hI\|_b
  \bigl(\kappa_i^{(k)}\eta_H\bigr)^2\rho_h^{2k}.
\end{align*}
\end{theorem}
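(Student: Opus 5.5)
The plan is to combine the subspace bound of \cref{thm:conv-exact-subspace} with the classical Rayleigh--Ritz a priori estimates for self-adjoint operators, as in \cite[Chapter~4]{saad2011numerical}. We work throughout in the \(b\)-inner product on \(V_h\), in which \(A_h\) is self-adjoint. Write \(\mathcal U^k=U^k\) and let \(P_k\) be its \(b\)-orthogonal projector. The Ritz pairs from \eqref{eq:RR} are then exactly the eigenpairs of the compressed operator \(\mathcal A_k:=P_kA_hP_k\) restricted to \(\mathcal U^k\). The key input is that the single target vector \(u_i^h\in\mathcal E_h\) is close to \(\mathcal U^k\):
\[
  \|(I-P_k)u_i^h\|_b\le\|(I-P_k)P_h\|_b=\theta(U^k,\mathcal E_h)\le\rho_h^k\eta_H .
\]
This uses the identity \(\theta_b=\|(I-P_{\mathcal S})P_{\mathcal T}\|_b\) recalled after \cref{def:subspace-angle}.

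\textbf{Eigenvector bound.} I would follow Saad's Theorem~4.6. Let \(\epsilon=\|(I-P_k)u_i^h\|_b\). Applying \(P_k\) to \((A_h-\lambda_i^h)u_i^h=0\) and splitting \(u_i^h=P_ku_i^h+(I-P_k)u_i^h\) gives
\[
  (\mathcal A_k-\lambda_i^h)P_ku_i^h=-P_kA_h(I-P_k)u_i^h .
\]
The \(b\)-norm of the right-hand side is at most \(\beta^{(k)}\epsilon\). Next, expand \(P_ku_i^h\) in the \(b\)-orthonormal Ritz basis and split off the component along \(u_i^k\). The complementary part \(w\) lies in the span of the other Ritz vectors, on which \(\mathcal A_k-\lambda_i^h\) has inverse bounded by \(1/\delta_i^{(k)}\). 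Hence \(\|w\|_b\le\beta^{(k)}\epsilon/\delta_i^{(k)}\).

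The error \(u_i^h-b(u_i^h,u_i^k)u_i^k\) is then \(w+(I-P_k)u_i^h\). These two pieces are \(b\)-orthogonal, since one lies in \(\mathcal U^k\) and the other in its complement. Therefore
\[
  \sin\angle(u_i^h,u_i^k)^2\le\epsilon^2\bigl(1+(\beta^{(k)}/\delta_i^{(k)})^2\bigr),
\]
which gives \(\theta(u_i^k,u_i^h)\le\kappa_i^{(k)}\epsilon\le\kappa_i^{(k)}\eta_H\rho_h^k\).

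\textbf{Eigenvalue bound.} I would use the standard quadratic Rayleigh-quotient identity. Write \(u_i^k=\cos\phi\,u_i^h+\sin\phi\,z\), with \(z\) a \(b\)-unit vector \(b\)-orthogonal to \(u_i^h\), and choose the sign so that \(\cos\phi\ge0\). Since \(A_h\) is self-adjoint and \(A_hu_i^h=\lambda_i^hu_i^h\), the cross terms vanish and
\[
  \lambda_i^k-\lambda_i^h=b\bigl((A_h-\lambda_i^h)u_i^k,u_i^k\bigr)=\sin^2\phi\; b\bigl((A_h-\lambda_i^h)z,z\bigr).
\]
Thus \(|\lambda_i^k-\lambda_i^h|\le\|A_h-\lambda_i^hI\|_b\sin^2\phi\). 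Inserting the eigenvector bound yields the stated \(\bigl(\kappa_i^{(k)}\eta_H\bigr)^2\rho_h^{2k}\).

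\textbf{Main obstacle.} The delicate step is the matching and separation argument. The proof needs \(u_i^k\) to be the Ritz vector singled out by the splitting, so that the gap \(\delta_i^{(k)}\) controls the inverse on the orthogonal complement of \(u_i^k\) inside \(\mathcal U^k\). This is where simplicity of \(\lambda_i^h\) and \(\delta_i^{(k)}>0\) enter; "matched" must be read to mean that \(u_i^k\) is the Ritz vector excluded from the minimum defining \(\delta_i^{(k)}\). I would state this interpretation explicitly, since \(\delta_i^{(k)}\) as written ranges over all other Ritz values.

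A second point to check is that \(\dim\mathcal U^k=s\) for every \(k\). This is guaranteed by \cref{thm:conv-exact-subspace}, so the Ritz problem on \(\mathcal U^k\) is well posed.
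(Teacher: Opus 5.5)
Your proposal is correct and follows the same route as the paper. The subspace bound of \cref{thm:conv-exact-subspace} gives \(\|(I-P_k)u_i^h\|_b\le\theta(U^k,\mathcal E_h)\le\eta_H\rho_h^k\), which then feeds the Rayleigh--Ritz estimate \(\theta(u_i^k,u_i^h)\le\kappa_i^{(k)}\|(I-P_k)u_i^h\|_b\) and the quadratic Rayleigh-quotient eigenvalue bound. The only difference is that the paper cites \cite[Chapter~4]{saad2011numerical} for these last two steps, while you prove them directly; your direct proof also does not need \(\lambda_i^k\) to be the Ritz value closest to \(\lambda_i^h\).
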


\begin{proof}
By \cref{thm:conv-exact-subspace},
\[
  \theta(U^k,\mathcal E_h)\le\eta_H\rho_h^k .
\]
Since \(u_i^h\in\mathcal E_h\) and \(P_k\) is the \(b\)-orthogonal
projector onto \(U^k\),
\[
  \|(I-P_k)u_i^h\|_b
  \le
  \theta(U^k,\mathcal E_h)
  \le
  \eta_H\rho_h^k .
\]
The Rayleigh--Ritz perturbation estimate for a self-adjoint operator in
the \(b\) inner product \cite[Chapter~4]{saad2011numerical} gives
\[
  \theta(u_i^k,u_i^h)
  \le
  \kappa_i^{(k)}\|(I-P_k)u_i^h\|_b.
\]
Substituting the preceding subspace bound proves the eigenvector
estimate.
The eigenvalue estimate follows directly from \cite[Proposition~4.5]{saad2011numerical}.
% For the eigenvalue estimate, use the Rayleigh quotient identity
% \(\lambda_i^k=a(u_i^k,u_i^k)\) with \(b(u_i^k,u_i^k)=1\).  After
% choosing the phase of \(u_i^k\), write
% \[
%   u_i^k=\alpha u_i^h+w,\qquad
%   w\perp u_i^h,\qquad
%   \|w\|_b=\theta(u_i^k,u_i^h).
% \]
% The variational eigenvalue relation for \(u_i^h\) and the
% \(b\)-self-adjointness of \(A_h\) eliminate the cross terms, giving
% \[
%   \lambda_i^k-\lambda_i^h
%   =
%   b((A_h-\lambda_i^hI)w,w).
% \]
% Therefore
% \[
%   |\lambda_i^k-\lambda_i^h|
%   \le
%   \|A_h-\lambda_i^hI\|_b\|w\|_b^2 .
% \]
% Inserting the eigenvector estimate gives the asserted eigenvalue bound.
\end{proof}

\section{Numerical Experiment}\label{sec:applications}

In this section, we use the linear elasticity model to evaluate the performance and robustness of the TLFSI method. 
The method is also applicable to other elliptic differential eigenvalue problems.
% The computational environment, model equation, and solver parameters are specified below.

\subsection{Setup}

All numerical experiments reported in this work were conducted on the High-Performance Computing (HPC) platform of Xiangtan University. The system runs the 64-bit Linux CentOS 7.5 operating system and consists of 85 compute nodes. Each node is equipped with two Intel Xeon Platinum 8358 processors operating at 2.60 GHz, providing a total of 64 CPU cores and 512 GB of memory per node. The theoretical double-precision peak performance of the entire platform is approximately 452608 GFlops. All numerical experiments in this work were performed in serial mode. The codes were compiled with the `-O2` optimization option.

The TLFSI method is applied to linear elasticity interior eigenvalue problems under different boundary conditions. For comparison, we also test the FEAST method \cite{Po09,TaPo14}, using FEAST version 4.0, and our implementation of the PHSI method \cite{LiXu23}. 
Each comparison uses the same finite element matrices and linear algebra kernels whenever the algorithms share these components.

% FEAST and TLFSI use eight quadrature nodes for the contour rule. 
The subspace dimension \(s\) is set to
\(
s=\lceil 1.8\tilde e\rceil,
\)
where \(\tilde e\) is the estimated number of eigenvalues in the interval.
Since the cost of estimating the number of eigenvalues is common to all methods and no corresponding routine is available in FEAST, this cost is excluded from the reported timings in all performance tests, although it is negligible in practice.
The shifted linear systems are solved using the sparse direct solver MUMPS version 4.10.0 \cite{amestoy2001fully}, with at most \(50\) outer iterations. For every method, the outer iteration will stop when the maximum absolute algebraic residual satisfies
\[
  \max_{1\leq i\leq s}
  \{r_i^k\}\leq10^{-6},
\]
where
\[
r_i^k=
\frac{\left\|A v_i^k-\lambda_i B v_i^k\right\|_2}
{\left\|\alpha B v_i^k\right\|_2},
\qquad
\alpha=\max(|a|,|b|),
\]
where \(A\) and \(B\) are the stiffness and mass matrices associated with \(a(\cdot,\cdot)\) and \(b(\cdot,\cdot)\), respectively.
The convergence criterion follows \cite{polizzi2020feast}. Note that, for the linear elasticity problem, we always have \(\alpha=b\).
Throughout the numerical experiments, we set \(\uprho=1, E=3000\) and \(\upsilon=0.3\) in \eqref{eq:abstract-pde-eig}.  
The FEAST and TLFSI method use the same filter parameters, subspace dimension, shifted-system tolerance, and stopping criterion specified above. 
For PHSI, we set \(\alpha=1.0\). For a fair comparison, the additive Schwarz preconditioner is replaced by a direct solver.
% The reported data include the contour, eigenvalue-count estimate, initial
% subspace dimension, inner shifted solver, preconditioner update policy,
% and maximum inner iteration count used in each experiment.

\subsection{Quality of the Initial Subspace}

According to Theorem \ref{theo:T_and_Th}, the initial subspace generated by the coarse space gradually approaches the target invariant subspace as the coarse mesh size $H$ decreases. We further verify this theoretical result through several representative numerical experiments, where the random initial subspace is also included as a comparison.

For the test reported in Table~\ref{tab:subspace_angle}, we consider the linear elasticity eigenvalue problem on \(\Omega=[0,1]^3\) with pure Neumann boundary conditions and discretize it using \(P_1\) finite elements on uniform meshes. The table reports the maximum subspace angles for the target spectral interval \([9.0\times10^{3},3.0\times10^{4}]\) on three fine meshes, \(h=\frac{1}{12}\), \(h=\frac{1}{24}\), and \(h=\frac{1}{48}\). The corresponding initial subspaces are constructed either from Gaussian random vectors or from coarse spaces with different mesh sizes \(H\). The quantity \(\sin\theta_{\max}\) denotes the sine of the maximum angle between the target eigenspace and the initial subspace. For the coarse-grid initial subspaces, we also report the observed convergence orders with respect to \(H\), computed relative to the reference coarse mesh \(H=\frac{1}{3}\).

\begin{table}[htbp]
\centering
\caption{Maximum subspace angles and corresponding convergence orders}
\label{tab:subspace_angle}
\begingroup\small\setlength{\tabcolsep}{4pt}
\begin{tabular}{cccccccccc}
\toprule
\multirow{2}{*}{$1/h$} & \multirow{2}{*}{Item} & \multirow{2}{*}{Random} & \multicolumn{7}{c}{$1/H$} \tabularnewline
\cmidrule(lr){4-10}
& & & 3 & 4 & 6 & 8 & 12 & 16 & 24 \tabularnewline
\midrule
\multirow{2}{*}{12} & $\sin\theta_{\max}$ & 9.98E-01 & 4.50E-01 & 2.89E-01 & 1.22E-01 & -- & -- & -- & -- \tabularnewline
& Order & -- & -- & 1.54 & 1.88 & -- & -- & -- & -- \tabularnewline
\midrule
\multirow{2}{*}{24} & $\sin\theta_{\max}$ & 1.00E+00 & 4.73E-01 & 3.15E-01 & 1.51E-01 & 9.06E-02 & 4.13E-02 & -- & -- \tabularnewline
& Order & -- & -- & 1.41 & 1.64 & 1.69 & 1.76 & -- & -- \tabularnewline
\midrule
\multirow{2}{*}{48} & $\sin\theta_{\max}$ & 1.00E+00 & 4.79E-01 & 3.22E-01 & 1.59E-01 & 1.03E-01 & 5.37E-02 & 3.13E-02 & 1.27E-02 \tabularnewline
& Order & -- & -- & 1.38 & 1.59 & 1.57 & 1.58 & 1.63 & 1.75 \tabularnewline
\bottomrule
\end{tabular}
\endgroup
\end{table}

As shown in Table~\ref{tab:subspace_angle}, the values of \(\sin\theta_{\max}\) for the random initial subspaces remain close to \(1\) on all three fine meshes, indicating that some directions in the target eigenspace are nearly orthogonal to the randomly generated initial subspace. In contrast, the coarse-grid initial subspaces provide substantially better approximations even for relatively coarse values of \(H\). Moreover, \(\sin\theta_{\max}\) decreases consistently as \(H\) is refined. The observed convergence rates are generally around \(1.4\)--\(1.8\), exhibiting an overall empirical behavior close to \(\mathcal{O}(H^{1.6})\). These results indicate that the coarse-grid initial subspace converges steadily toward the target eigenspace and provides a markedly more accurate initial approximation than the Gaussian random subspace.

To further investigate the influence of the coarse mesh size \(H\) on the convergence and computational efficiency of the proposed TLFSI method, we perform numerical experiments for several fine mesh sizes \(h\) and a range of admissible coarse mesh sizes \(H\). The model problem, boundary conditions, and physical parameters are the same as those used in Table~\ref{tab:subspace_angle}. The fine and coarse meshes are required to be nested, namely, each coarse-grid element is composed of an integer number of fine-grid elements in each coordinate direction. Unless otherwise stated, all numerical experiments in the following tests use \(n_q=4\) quadrature nodes on the contour. By exploiting the conjugate symmetry in~\eqref{eq:real-filter}, only \(n_q/2=2\) complex shifted systems need to be solved for each filter application.

Under this setting, Table~\ref{tab:TLFSI_INT_IDX} reports the numerical performance of TLFSI-INT and TLFSI-IDX for four target spectral intervals and different combinations of \(h\) and \(H\). For each pair \((h,H)\), we report the number of outer iterations, the number of shifted matrix solves, and the total computational time. Here, \(e\) denotes the number of eigenvalues contained in the target interval, while \(s\) denotes the dimension of the initial subspace.

\begin{table}[htbp]
\centering
\captionsetup{justification=centering}
\caption{Numerical performance of TLFSI-INT and TLFSI-IDX for different fine mesh sizes $h$ and coarse mesh sizes $H$}
\label{tab:TLFSI_INT_IDX}
\begingroup
\scriptsize
\setlength{\tabcolsep}{6pt}
\begin{tabular}{cccccccccc}
\toprule
% ============================================================
% h = 1/24
% ============================================================
\multicolumn{10}{c}{$h=1/24$}\\
\midrule
\multirow{4}{*}{$H$} &
\multirow{4}{*}{Method} &
\multicolumn{2}{c}{$[9.0e+3,3.0e+4]$} &
\multicolumn{2}{c}{$[6.0e+4,9.0e+4]$} &
\multicolumn{2}{c}{$[9.0e+4,1.2e+5]$} &
\multicolumn{2}{c}{$[1.2e+5,1.5e+5]$}\\
\cmidrule(lr){3-4}\cmidrule(lr){5-6}
\cmidrule(lr){7-8}\cmidrule(lr){9-10}
&&\multicolumn{2}{c}{$e=17,\ s=26$}
&\multicolumn{2}{c}{$e=22,\ s=43$}
&\multicolumn{2}{c}{$e=27,\ s=45$}
&\multicolumn{2}{c}{$e=23,\ s=39$}\\
&&Iter. (Solves)&Time
&Iter. (Solves)&Time
&Iter. (Solves)&Time
&Iter. (Solves)&Time\\
\midrule
\multirow{2}{*}{$1/3$}
&TLFSI-INT
&3 (196)&15.47
&6 (468)&31.55
&6 (458)&30.85
&11 (602)&38.20\\
&TLFSI-IDX
&3 (196)&17.43
&7 (482)&33.18
&5 (410)&29.54
&10 (570)&38.17\\
\cmidrule(lr){1-10}

\multirow{2}{*}{$1/4$}
&TLFSI-INT
&3 (196)&15.54
&6 (448)&29.83
&6 (454)&31.32
&11 (604)&39.39\\
&TLFSI-IDX
&3 (196)&17.21
&4 (346)&26.20
&4 (372)&27.05
&10 (602)&39.43\\
\cmidrule(lr){1-10}
\multirow{2}{*}{$1/6$}
&TLFSI-INT
&3 (194)&15.92
&3 (298)&22.13
&4 (348)&26.04
&10 (574)&37.03\\
&TLFSI-IDX
&3 (194)&17.25
&3 (298)&23.73
&4 (356)&27.81
&4 (330)&26.00\\
\cmidrule(lr){1-10}
\multirow{2}{*}{$1/8$}
&TLFSI-INT
&3 (186)&15.18
&3 (298)&21.45
&4 (350)&25.59
&7 (432)&29.76\\
&TLFSI-IDX
&3 (186)&17.37
&3 (298)&23.78
&4 (360)&27.90
&4 (322)&25.50\\
\cmidrule(lr){1-10}
\multirow{2}{*}{$1/12$}
&TLFSI-INT
&3 (172)&15.24
&3 (284)&22.06
&3 (304)&23.72
&3 (274)&21.70\\
&TLFSI-IDX
&3 (172)&22.01
&3 (284)&28.49
&4 (360)&34.99
&4 (310)&33.14\\
\midrule
% ============================================================
% h = 1/48
% ============================================================
\multicolumn{10}{c}{$h=1/48$}\\
\midrule
\multirow{4}{*}{$H$} &
\multirow{4}{*}{Method} &
\multicolumn{2}{c}{$[9.0e+3,3.0e+4]$} &
\multicolumn{2}{c}{$[6.0e+4,9.0e+4]$} &
\multicolumn{2}{c}{$[9.0e+4,1.2e+5]$} &
\multicolumn{2}{c}{$[1.2e+5,1.5e+5]$}\\
\cmidrule(lr){3-4}\cmidrule(lr){5-6}
\cmidrule(lr){7-8}\cmidrule(lr){9-10}
&&\multicolumn{2}{c}{$e=17,\ s=25$}
&\multicolumn{2}{c}{$e=27,\ s=44$}
&\multicolumn{2}{c}{$e=23,\ s=41$}
&\multicolumn{2}{c}{$e=22,\ s=32$}\\
&&Iter. (Solves)&Time
&Iter. (Solves)&Time
&Iter. (Solves)&Time
&Iter. (Solves)&Time\\
\midrule
\multirow{2}{*}{$1/3$}
&TLFSI-INT
&3 (190)&220.24
&7 (526)&434.28
&11 (636)&498.67
&11 (502)&412.60\\
&TLFSI-IDX
&3 (190)&243.92
&11 (608)&521.85
&6 (436)&407.89
&11 (526)&450.63\\
\cmidrule(lr){1-10}
\multirow{2}{*}{$1/4$}
&TLFSI-INT
&3 (190)&216.26
&10 (584)&491.68
&5 (384)&347.07
&15 (584)&477.52\\
&TLFSI-IDX
&3 (190)&243.45
&4 (348)&360.03
&4 (360)&360.70
&12 (588)&484.58\\
\cmidrule(lr){1-10}
\multirow{2}{*}{$1/6$}
&TLFSI-INT
&3 (188)&220.50
&3 (304)&291.59
&5 (378)&330.30
&14 (620)&482.76\\
&TLFSI-IDX
&3 (188)&243.37
&3 (304)&326.17
&5 (390)&376.21
&8 (392)&365.77\\
\cmidrule(lr){1-10}
\multirow{2}{*}{$1/8$}
&TLFSI-INT
&3 (188)&211.40
&3 (304)&289.14
&6 (406)&356.64
&11 (504)&406.15\\
&TLFSI-IDX
&3 (188)&247.51
&3 (304)&313.78
&4 (342)&343.46
&7 (354)&347.70\\
\cmidrule(lr){1-10}
\multirow{2}{*}{$1/12$}
&TLFSI-INT
&3 (174)&210.88
&3 (300)&281.25
&6 (406)&353.37
&9 (432)&376.80\\
&TLFSI-IDX
&3 (174)&248.79
&3 (300)&331.36
&4 (340)&347.21
&7 (338)&350.37\\
\cmidrule(lr){1-10}
\multirow{2}{*}{$1/16$}
&TLFSI-INT
&3 (166)&214.97
&3 (292)&283.33
&3 (282)&275.87
&10 (438)&379.47\\
&TLFSI-IDX
&3 (166)&251.68
&3 (292)&332.24
&4 (328)&365.90
&7 (368)&387.95\\
\cmidrule(lr){1-10}
\multirow{2}{*}{$1/24$}
&TLFSI-INT
&3 (158)&210.45
&3 (278)&303.05
&4 (304)&317.95
&4 (258)&276.92\\
&TLFSI-IDX
&3 (158)&361.03
&3 (278)&455.66
&4 (320)&499.56
&4 (256)&479.63\\
\bottomrule
\end{tabular}
\endgroup
\end{table}

The results in Table~\ref{tab:TLFSI_INT_IDX} show that the coarse mesh size \(H\) has a significant influence on both convergence and computational cost. In general, reducing \(H\) improves the quality of the coarse-grid initial subspace and tends to decrease the number of outer iterations and shifted matrix solves, particularly for the higher spectral intervals. However, a finer coarse mesh also increases the cost of the coarse-grid eigenvalue problem. Therefore, the smallest admissible value of \(H\) does not necessarily yield the shortest total computational time.

Following the strategy discussed in Section~\ref{sec:select_H}, we determine \(H\) according to criterion~\eqref{equ:compute_H}. For the present problem,
\[
\mu_L=\frac{E}{2(1+\nu)}=1153.846.
\]
Among all coarse meshes satisfying the mesh-nesting condition, we choose the largest \(H\) satisfying
\[
H := H(b,\tau,h)
=
\max\left\{
K\in S_h:
K<\frac{\tau}{\kappa_S(b)}
\right\},
\]
where
\[
S_h=
\left\{
\frac{1}{m}:\,
m\in\mathbb N,\,
m\mid\frac{1}{h}
\right\}.
\]

The numerical results suggest that \(\tau=2.0\) provides a suitable balance between the approximation quality of the coarse-grid initial subspace and the cost of the coarse problem. With this choice,
\[
H=
\begin{cases}
1/3, & b=3.0\times10^4,\\
1/6, & b=9.0\times10^4,\\
1/6, & b=1.2\times10^5,\\
1/6, & b=1.5\times10^5.
\end{cases}
\]

This selection is consistent with the observed numerical behavior. For example, for \(h=1/48\) and the interval \([6.0\times10^4,9.0\times10^4]\), TLFSI-INT requires \(10\) iterations and \(491.68\) seconds for \(H=1/4\), whereas \(H=1/6\) requires only \(3\) iterations and \(291.59\) seconds. TLFSI-IDX exhibits the same tendency: the iteration count decreases from \(4\) to \(3\), while the computational time decreases from \(360.03\) to \(326.17\) seconds. These results indicate that an excessively coarse initial space may noticeably deteriorate convergence, whereas further refinement beyond a suitable value of \(H\) does not consistently reduce the total computational time.

To further examine the robustness of the choice \(\tau=2.0\), we consider three additional and wider target spectral intervals,
\[
[9.0\times10^3,6.0\times10^4],\qquad
[6.0\times10^4,1.2\times10^5],\qquad
[1.2\times10^5,1.8\times10^5].
\]
These intervals differ from those considered above and therefore provide an additional test of whether the same coarse-mesh selection criterion remains effective without retuning \(\tau\). Table~\ref{tab:H_validation} reports the corresponding numerical results for both TLFSI-INT and TLFSI-IDX.

\begin{table}[htbp]
\centering
\caption{Numerical performance of TLFSI-INT and TLFSI-IDX for additional target spectral intervals}
\label{tab:H_validation}
\begingroup
\scriptsize
\setlength{\tabcolsep}{6pt}

\begin{tabular}{cccccccc}
\toprule

% ============================================================
% h = 1/24
% ============================================================
\multicolumn{8}{c}{$h=1/24$}\\
\midrule

\multirow{4}{*}{$H$} &
\multirow{4}{*}{Method} &
\multicolumn{2}{c}{$[9.0e+3,6.0e+4]$} &
\multicolumn{2}{c}{$[6.0e+4,1.2e+5]$} &
\multicolumn{2}{c}{$[1.2e+5,1.8e+5]$}\\
\cmidrule(lr){3-4}\cmidrule(lr){5-6}\cmidrule(lr){7-8}

&&\multicolumn{2}{c}{$e=33,\ s=51$}
&\multicolumn{2}{c}{$e=49,\ s=73$}
&\multicolumn{2}{c}{$e=50,\ s=87$}\\

&&Iter. (Solves)&Time
&Iter. (Solves)&Time
&Iter. (Solves)&Time\\
\midrule

\multirow{2}{*}{$1/3$}
&TLFSI-INT
&4 (392)&27.78
&4 (584)&39.25
&11 (1304)&83.80\\
&TLFSI-IDX
&4 (392)&29.77
&11 (966)&62.40
&4 (702)&48.54\\
\cmidrule(lr){1-8}

\multirow{2}{*}{$1/4$}
&TLFSI-INT
&4 (388)&27.92
&4 (586)&40.78
&5 (774)&53.68\\
&TLFSI-IDX
&4 (388)&29.86
&5 (632)&45.13
&4 (684)&47.62\\
\cmidrule(lr){1-8}

\multirow{2}{*}{$1/6$}
&TLFSI-INT
&3 (346)&24.59
&4 (558)&39.19
&5 (762)&53.97\\
&TLFSI-IDX
&3 (346)&26.02
&4 (564)&41.69
&4 (692)&48.18\\
\cmidrule(lr){1-8}

\multirow{2}{*}{$1/8$}
&TLFSI-INT
&3 (340)&24.47
&4 (562)&39.39
&4 (642)&46.72\\
&TLFSI-IDX
&3 (340)&26.10
&4 (562)&42.26
&4 (684)&50.61\\
\cmidrule(lr){1-8}

\multirow{2}{*}{$1/12$}
&TLFSI-INT
&3 (334)&25.46
&4 (550)&39.74
&3 (562)&42.27\\
&TLFSI-IDX
&3 (334)&31.87
&4 (550)&48.83
&4 (670)&59.04\\

\midrule

% ============================================================
% h = 1/48
% ============================================================
\multicolumn{8}{c}{$h=1/48$}\\
\midrule

\multirow{4}{*}{$H$} &
\multirow{4}{*}{Method} &
\multicolumn{2}{c}{$[9.0e+3,6.0e+4]$} &
\multicolumn{2}{c}{$[6.0e+4,1.2e+5]$} &
\multicolumn{2}{c}{$[1.2e+5,1.8e+5]$}\\
\cmidrule(lr){3-4}\cmidrule(lr){5-6}\cmidrule(lr){7-8}

&&\multicolumn{2}{c}{$e=33,\ s=62$}
&\multicolumn{2}{c}{$e=50,\ s=73$}
&\multicolumn{2}{c}{$e=57,\ s=102$}\\

&&Iter. (Solves)&Time
&Iter. (Solves)&Time
&Iter. (Solves)&Time\\
\midrule

\multirow{2}{*}{$1/3$}
&TLFSI-INT
&4 (478)&418.34
&6 (746)&610.22
&7 (1092)&822.90\\
&TLFSI-IDX
&4 (478)&443.88
&12 (1032)&788.76
&5 (886)&752.80\\
\cmidrule(lr){1-8}

\multirow{2}{*}{$1/4$}
&TLFSI-INT
&4 (472)&416.73
&6 (702)&568.78
&4 (780)&630.68\\
&TLFSI-IDX
&4 (472)&429.22
&5 (648)&574.47
&4 (774)&677.81\\
\cmidrule(lr){1-8}

\multirow{2}{*}{$1/6$}
&TLFSI-INT
&3 (408)&361.59
&4 (560)&460.48
&5 (888)&705.46\\
&TLFSI-IDX
&3 (408)&378.02
&4 (572)&492.54
&4 (778)&670.18\\
\cmidrule(lr){1-8}

\multirow{2}{*}{$1/8$}
&TLFSI-INT
&3 (404)&352.03
&7 (716)&579.20
&4 (750)&631.35\\
&TLFSI-IDX
&3 (404)&390.71
&7 (716)&613.48
&4 (774)&654.41\\
\cmidrule(lr){1-8}

\multirow{2}{*}{$1/12$}
&TLFSI-INT
&3 (400)&350.20
&6 (660)&560.57
&4 (744)&618.68\\
&TLFSI-IDX
&3 (400)&399.36
&6 (660)&587.01
&4 (770)&680.82\\
\cmidrule(lr){1-8}

\multirow{2}{*}{$1/16$}
&TLFSI-INT
&3 (386)&348.02
&5 (594)&501.27
&3 (652)&548.06\\
&TLFSI-IDX
&3 (386)&392.75
&5 (594)&556.79
&4 (748)&676.59\\
\cmidrule(lr){1-8}

\multirow{2}{*}{$1/24$}
&TLFSI-INT
&3 (362)&353.39
&4 (546)&492.10
&3 (640)&566.92\\
&TLFSI-IDX
&3 (362)&514.05
&4 (546)&668.35
&3 (642)&812.56\\

\bottomrule
\end{tabular}

\endgroup
\end{table}

According to the criterion with \(\tau=2.0\), the coarse mesh sizes corresponding to the three upper spectral bounds are
\[
H=
\begin{cases}
1/4, & b=6.0\times10^4,\\
1/6, & b=1.2\times10^5,\\
1/8, & b=1.8\times10^5.
\end{cases}
\]

The results in Table~\ref{tab:H_validation} further support the choice \(\tau=2.0\). For the interval \([6.0\times10^4,1.2\times10^5]\) and \(h=1/48\), the selected value \(H=1/6\) reduces the iteration count and computational time compared with the coarser choice \(H=1/4\) for both TLFSI-INT and TLFSI-IDX. Similarly, for the higher interval \([1.2\times10^5,1.8\times10^5]\), the selected value \(H=1/8\) improves the performance compared with \(H=1/6\). For the lower interval \([9.0\times10^3,6.0\times10^4]\), although finer coarse meshes may yield slightly smaller computational times in some cases, the selected value \(H=1/4\) already provides stable convergence.

Overall, the results for both the original and additional spectral intervals show that \(\tau=2.0\), without interval-dependent retuning, provides a reasonable and robust balance between avoiding an overly coarse initial space and limiting the cost of the coarse-grid eigenvalue problem. We therefore use \(\tau=2.0\) in the subsequent experiments.

Finally, we examine the effect of the deflation technique using the coarse mesh sizes selected above by the criterion with \(\tau=2.0\). Specifically, \(H=1/4\), \(H=1/6\), and \(H=1/8\) are used for the target intervals \([9.0\times10^3,6.0\times10^4]\), \([6.0\times10^4,1.2\times10^5]\), and \([1.2\times10^5,1.8\times10^5]\), respectively. Table~\ref{tab:deflation_comparison} compares the default TLFSI variants with their counterparts without deflation for \(h=1/24\) and \(h=1/48\). Here, the suffix ``-ND'' denotes that the deflation technique is not used, while all other algorithmic settings are kept unchanged.

\begin{table}[htbp]
\centering
\caption{Comparison of TLFSI with and without deflation using the coarse mesh sizes selected by \(\tau=2.0\)}
\label{tab:deflation_comparison}
\begingroup
\scriptsize
\setlength{\tabcolsep}{6pt}

\begin{tabular}{ccccccc}
\toprule

% ============================================================
% h = 1/24
% ============================================================
\multicolumn{7}{c}{$h=1/24$}\\
\midrule

\multirow{3}{*}{Method} &
\multicolumn{2}{c}{$[9.0e+3,6.0e+4]$} &
\multicolumn{2}{c}{$[6.0e+4,1.2e+5]$} &
\multicolumn{2}{c}{$[1.2e+5,1.8e+5]$}\\
\cmidrule(lr){2-3}\cmidrule(lr){4-5}\cmidrule(lr){6-7}

&\multicolumn{2}{c}{$e=33,\ s=51$}
&\multicolumn{2}{c}{$e=49,\ s=73$}
&\multicolumn{2}{c}{$e=50,\ s=87$}\\

&Iter. (Solves)&Time
&Iter. (Solves)&Time
&Iter. (Solves)&Time\\
\midrule

TLFSI-INT-ND
&4 (448)&30.59
&4 (624)&42.75
&4 (736)&50.18\\

TLFSI-INT
&4 (388)&27.92
&4 (558)&39.19
&4 (642)&46.72\\

TLFSI-IDX-ND
&4 (448)&31.59
&4 (624)&43.80
&4 (736)&52.25\\

TLFSI-IDX
&4 (388)&29.86
&4 (564)&41.69
&4 (684)&50.61\\

\midrule

% ============================================================
% h = 1/48
% ============================================================
\multicolumn{7}{c}{$h=1/48$}\\
\midrule

\multirow{3}{*}{Method} &
\multicolumn{2}{c}{$[9.0e+3,6.0e+4]$} &
\multicolumn{2}{c}{$[6.0e+4,1.2e+5]$} &
\multicolumn{2}{c}{$[1.2e+5,1.8e+5]$}\\
\cmidrule(lr){2-3}\cmidrule(lr){4-5}\cmidrule(lr){6-7}

&\multicolumn{2}{c}{$e=33,\ s=62$}
&\multicolumn{2}{c}{$e=50,\ s=73$}
&\multicolumn{2}{c}{$e=57,\ s=102$}\\

&Iter. (Solves)&Time
&Iter. (Solves)&Time
&Iter. (Solves)&Time\\
\midrule

TLFSI-INT-ND
&4 (536)&419.33
&4 (624)&505.05
&4 (856)&663.06\\

TLFSI-INT
&4 (472)&416.73
&4 (560)&460.48
&4 (750)&631.35\\

TLFSI-IDX-ND
&4 (536)&466.40
&4 (624)&526.41
&4 (856)&690.83\\

TLFSI-IDX
&4 (472)&429.22
&4 (572)&492.54
&4 (774)&654.41\\

\bottomrule
\end{tabular}

\endgroup
\end{table}

As shown in Table~\ref{tab:deflation_comparison}, deflation does not change the number of outer iterations, but consistently reduces both the number of matrix solves and the total computational time. For example, for \(h=1/48\) and the interval \([6.0\times10^4,1.2\times10^5]\), the matrix solves decrease from \(624\) to \(560\) for TLFSI-INT and from \(624\) to \(572\) for TLFSI-IDX, while the corresponding computational times decrease from \(505.05\) to \(460.48\) seconds and from \(526.41\) to \(492.54\) seconds, respectively. Similar reductions are observed for the other test cases. Therefore, unless otherwise stated, TLFSI-INT and TLFSI-IDX employ deflation by default in all subsequent numerical experiments.

\subsection{Performance}
\label{sec:exp-elasticity}

We evaluate the performance and adaptability of the proposed TLFSI method for the linear elasticity eigenvalue problem~\eqref{eq:abstract-pde-eig} under both pure Neumann and Robin boundary conditions. To assess the effectiveness of the proposed framework under different boundary conditions, we compare TLFSI with a randomly initialized variant, TLFSI-RAN, and with FEAST. The two coarse-grid initialization strategies introduced above, TLFSI-INT and TLFSI-IDX, are both included in the comparison. Here, TLFSI-RAN uses an initial subspace generated by random vectors whose entries are independently drawn from the standard normal distribution. The physical parameters and finite element discretization are kept consistent with the preceding experiments, and all TLFSI variants employ the deflation technique by default. For TLFSI-INT and TLFSI-IDX, the coarse mesh sizes are selected using the criterion with \(\tau=2.0\), while TLFSI-RAN uses a random initial subspace of the same dimension \(s\). Tables~\ref{tab:TLFSI_random_FEAST} and~\ref{tab:TLFSI_random_FEAST_Robin} report the corresponding numerical results for the Neumann and Robin boundary conditions, respectively. To ensure a fair comparison with FEAST, the computational times reported for the TLFSI variants in these two tables do not include the cost of estimating the number of eigenvalues in the target interval. This is because FEAST requires the user to prescribe the dimension of the search subspace in advance, rather than estimating the eigenvalue count as part of the eigensolver. In both tables, \(e\) denotes the number of eigenvalues contained in the target interval, and \(s\) denotes the dimension of the search subspace.

\begin{table}[htbp]
\centering
\caption{Comparison of TLFSI with different initial subspaces and FEAST for pure Neumann boundary conditions}
\label{tab:TLFSI_random_FEAST}
\begingroup
\scriptsize
\setlength{\tabcolsep}{6pt}

\begin{tabular}{cccc}
\toprule

% ============================================================
% h = 1/24
% ============================================================
\multicolumn{4}{c}{$h=1/24$}\\
\midrule

\multirow{3}{*}{Solver} &
$[9.0e+3,6.0e+4]$ &
$[6.0e+4,1.2e+5]$ &
$[1.2e+5,1.8e+5]$\\

&
$e=33,\ s=51$ &
$e=49,\ s=73$ &
$e=50,\ s=87$\\

&Iterations (Time)
&Iterations (Time)
&Iterations (Time)\\
\midrule

TLFSI-INT
&4 (25.96)
&4 (37.23)
&4 (44.69)\\

TLFSI-IDX
&4 (27.76)
&4 (39.67)
&4 (48.59)\\

TLFSI-RAN
&4 (27.81)
&5 (44.86)
&5 (52.80)\\

FEAST
&5 (30.95)
&6 (51.01)
&6 (59.18)\\

\midrule

% ============================================================
% h = 1/48
% ============================================================
\multicolumn{4}{c}{$h=1/48$}\\
\midrule

\multirow{3}{*}{Solver} &
$[9.0e+3,6.0e+4]$ &
$[6.0e+4,1.2e+5]$ &
$[1.2e+5,1.8e+5]$\\

&
$e=33,\ s=62$ &
$e=50,\ s=73$ &
$e=57,\ s=102$\\

&Iterations (Time)
&Iterations (Time)
&Iterations (Time)\\
\midrule

TLFSI-INT
&4 (392.78)
&4 (438.12)
&4 (607.52)\\

TLFSI-IDX
&4 (406.37)
&4 (470.44)
&4 (628.45)\\

TLFSI-RAN
&4 (401.73)
&5 (529.73)
&5 (727.01)\\

FEAST
&4 (394.07)
&5 (520.37)
&5 (664.23)\\

\bottomrule
\end{tabular}

\endgroup
\end{table}

\begin{table}[htbp]
\centering
\caption{Comparison of TLFSI with different initial subspaces and FEAST for Robin boundary conditions}
\label{tab:TLFSI_random_FEAST_Robin}
\begingroup
\scriptsize
\setlength{\tabcolsep}{6pt}

\begin{tabular}{cccc}
\toprule

% ============================================================
% h = 1/24
% ============================================================
\multicolumn{4}{c}{$h=1/24$}\\
\midrule

\multirow{3}{*}{Solver} &
$[3.0e+4,9.0e+4]$ &
$[9.0e+4,1.5e+5]$ &
$[1.5e+5,2.1e+5]$\\

&
$e=13,\ s=24$ &
$e=26,\ s=36$ &
$e=30,\ s=54$\\

&Iterations (Time)
&Iterations (Time)
&Iterations (Time)\\
\midrule

TLFSI-INT
&3 (12.71)
&6 (21.53)
&3 (24.63)\\

TLFSI-IDX
&3 (13.97)
&4 (20.89)
&4 (29.88)\\

TLFSI-RAN
&4 (14.89)
&5 (22.19)
&4 (28.30)\\

FEAST
&4 (15.16)
&4 (20.05)
&5 (33.80)\\

\midrule

% ============================================================
% h = 1/48
% ============================================================
\multicolumn{4}{c}{$h=1/48$}\\
\midrule

\multirow{3}{*}{Solver} &
$[3.0e+4,9.0e+4]$ &
$[9.0e+4,1.5e+5]$ &
$[1.5e+5,2.1e+5]$\\

&
$e=13,\ s=22$ &
$e=26,\ s=42$ &
$e=37,\ s=58$\\

&Iterations (Time)
&Iterations (Time)
&Iterations (Time)\\
\midrule

TLFSI-INT
&3 (180.72)
&3 (245.71)
&3 (328.79)\\

TLFSI-IDX
&3 (209.45)
&4 (319.85)
&4 (397.73)\\

TLFSI-RAN
&4 (205.25)
&5 (326.48)
&4 (382.96)\\

FEAST
&4 (202.60)
&4 (289.75)
&5 (429.22)\\

\bottomrule
\end{tabular}

\endgroup
\end{table}

The results in Tables~\ref{tab:TLFSI_random_FEAST} and~\ref{tab:TLFSI_random_FEAST_Robin} show that TLFSI maintains stable convergence behavior under both pure Neumann and Robin boundary conditions, demonstrating its adaptability to different boundary settings. Under pure Neumann boundary conditions, the TLFSI variants based on coarse-grid spectral information generally require fewer outer iterations and less computational time than TLFSI-RAN, particularly for the higher spectral intervals. For example, for \(h=1/48\) and the interval \([6.0\times10^4,1.2\times10^5]\), TLFSI-INT and TLFSI-IDX require \(438.12\) and \(470.44\) seconds, respectively, compared with \(529.73\) seconds for TLFSI-RAN and \(520.37\) seconds for FEAST.

A similar behavior is observed under Robin boundary conditions. For \(h=1/48\), TLFSI-INT requires three outer iterations for all three target intervals, while TLFSI-RAN requires four, five, and four iterations, respectively. For the interval \([9.0\times10^4,1.5\times10^5]\), TLFSI-INT requires \(245.71\) seconds, compared with \(326.48\) seconds for TLFSI-RAN and \(289.75\) seconds for FEAST. The TLFSI-IDX results also remain stable over the tested intervals.

Overall, the experiments under the two boundary conditions indicate that the proposed TLFSI framework is not restricted to a particular boundary condition and maintains stable convergence and computational efficiency for both Neumann and Robin problems. Compared with Gaussian random initialization, the use of coarse-grid spectral information generally improves convergence and reduces computational time. Moreover, TLFSI remains competitive with FEAST in terms of both iteration counts and computational cost. These results demonstrate the adaptability of TLFSI to different boundary conditions and further support the use of coarse-grid spectral information for constructing the initial subspace.

To compare PHSI and TLFSI fairly, we consider the computation of the first seventeen nonzero eigenpairs. PHSI directly computes the first seventeen nonzero eigenvalues and eigenvectors, whereas TLFSI is applied to the target interval
$
[0.0,3.0\times10^4],
$
which contains exactly seventeen nonzero eigenvalues. For the pure Neumann problem considered here, the generalized eigenvalues are nonnegative. Therefore, when the left endpoint of the target interval is chosen as \(a=0.0\), the coarse-grid eigenvectors selected by TLFSI-INT coincide with those selected by TLFSI-IDX. Hence, the two initialization strategies produce the same initial subspace in this test, and only TLFSI-INT is reported below.

For both methods, the dimension of the search subspace and the convergence tolerance are chosen consistently. At each outer iteration, PHSI requires the solution of a multiple-right-hand-side linear system of the form
\[
(A+\alpha M)X=B,
\]
whereas TLFSI requires the solution of shifted systems associated with the contour quadrature,
\[
(A-\sigma_j M)X=B.
\]
The shifted systems corresponding to different quadrature nodes are mutually independent and can therefore be solved in parallel. In the present experiments, however, all shifted systems are solved sequentially. Since the shifts in TLFSI are generally complex, each shifted solve is also more expensive than the real-valued linear systems arising in PHSI. Therefore, in addition to the number of outer iterations, we report the total number of matrix solves and the computational time to provide a more comprehensive comparison of the two methods.

\begin{table}[htbp]
\centering
\caption{Comparison of PHSI and TLFSI-INT for the first seventeen nonzero eigenpairs}
\label{tab:phsi_tlfsi_17}
\begingroup
\small
\setlength{\tabcolsep}{8pt}

\begin{tabular}{ccccc}
\toprule
\multirow{2}{*}{$1/h$} &
\multicolumn{2}{c}{PHSI} &
\multicolumn{2}{c}{TLFSI-INT}\\
\cmidrule(lr){2-3}\cmidrule(lr){4-5}
& Iter. (Solves) & Time
& Iter. (Solves) & Time\\
\midrule

12
& 60 (1020) & 6.34
& 3 (238)   & 1.44\\

24
& 51 (867)  & 48.04
& 3 (244)   & 18.96\\

48
& 44 (748)  & 410.25
& 4 (272)   & 265.62\\

\bottomrule
\end{tabular}

\endgroup
\end{table}

Table~\ref{tab:phsi_tlfsi_17} shows that TLFSI-INT requires substantially fewer outer iterations than PHSI for all three mesh sizes. For \(1/h=12\) and \(24\), TLFSI-INT converges in only \(3\) outer iterations, compared with \(60\) and \(51\) iterations for PHSI, respectively. For the finest mesh, \(1/h=48\), TLFSI-INT requires \(4\) iterations, whereas PHSI requires \(44\) iterations.

The reduction in the number of outer iterations also leads to a substantial decrease in the total number of matrix solves. For \(1/h=12\), the number of matrix solves decreases from \(1020\) for PHSI to \(238\) for TLFSI-INT. For \(1/h=24\) and \(48\), the corresponding numbers decrease from \(867\) to \(244\) and from \(748\) to \(272\), respectively. Consequently, TLFSI-INT is faster than PHSI on all three meshes. In particular, for \(1/h=48\), the computational time is reduced from \(410.25\) seconds for PHSI to \(265.62\) seconds for TLFSI-INT.

Although the shifted systems in TLFSI are generally more expensive than the real-valued systems in PHSI, the significantly smaller number of outer iterations and matrix solves compensates for this additional cost in the present experiments. Moreover, since the shifted systems associated with different quadrature nodes are independent, the computational cost of TLFSI can potentially be further reduced through parallel solution of these systems. Overall, the results demonstrate that TLFSI-INT provides substantially faster convergence than PHSI for this eigenvalue computation task while also reducing the total computational cost.

% ============================================================
%  7. CONCLUSIONS
% ============================================================
\section{Conclusions}\label{sec:conclusions}

We have proposed a two-level filtered subspace iterative (TLFSI)
method for interior eigenvalue problems arising from finite element
discretizations of self-adjoint PDEs. Its initialization strategy
adaptively determines the coarse mesh size $H$ according to the target
spectral interval and identifies a coarse eigenspace approximating the
target invariant subspace.

Under the assumptions of the analysis, we have established an
$\mathcal O(H^2)$ bound for the initial subspace gap and proved
geometric convergence of the subspace iteration with exact filtering.
The contraction factor $\rho_h$ depends only on the filter separation
between the spectral region of interest and the remainder of the
spectrum, while the coarse mesh size $H$ controls the initial subspace
error. We have also derived error estimates for the Ritz vectors and Ritz values
under the corresponding separation conditions.

Numerical experiments for three-dimensional linear elasticity have
shown that the proposed initialization provides more accurate
approximations to the target invariant subspace than random
initialization. In comparisons with FEAST and PHSI, TLFSI has achieved
reductions in iteration counts and computational time, with the gains
depending on the target interval and filter parameters.

\section*{Funding}
This work was supported by the National Natural Science Foundation of
China [Grant No. 12371373, 12671450]; the Postgraduate Scientific
Research Innovation Project of Xiangtan University [Grant No. 
XDCX2025Y209]; and the Postgraduate Scientific Research Innovation
Project of Hunan Province [Grant No. CX20250935].

\bibliographystyle{unsrt}
\bibliography{references}

@Article{LiXu23,
  author  = {Qigang Liang and Xuejun Xu},
  title   = {A Two-Level Preconditioned {Helmholtz} Subspace Iterative Method for {Maxwell} Eigenvalue Problems},
  journal = {SIAM Journal on Numerical Analysis},
  volume  = {61},
  number  = {2},
  pages   = {642--674},
  year    = {2023},
  doi     = {10.1137/21M1392012},
}

@Article{LiXu26,
  author        = {Qigang Liang and Xuejun Xu},
  title         = {A Two-Level Additive {Schwarz} Method for Computing Interior Multiple and Clustered Eigenvalues of Symmetric Elliptic Operators},
  journal       = {arXiv preprint arXiv:2604.13889},
  year          = {2026},
  eprint        = {2604.13889},
  archivePrefix = {arXiv},
  primaryClass  = {math.NA},
}

@article{liang2024,
  title={A Two-Level Block Preconditioned Jacobi--Davidson Method for Multiple and Clustered Eigenvalues of Elliptic Operators},
  author={Liang, Qigang and Wang, Wei and Xu, Xuejun},
  journal={SIAM Journal on Numerical Analysis},
  volume={62},
  number={2},
  pages={998--1019},
  year={2024},
  publisher={SIAM}
}

@Article{LiXu21,
  author  = {Qigang Liang and Xuejun Xu},
  title   = {A Two-Level Preconditioned {Helmholtz--Jacobi--Davidson}
             Method for the {Maxwell} Eigenvalue Problem},
  journal = {Journal of Computational Physics},
  volume  = {443},
  pages   = {110564},
  year    = {2021},
  doi     = {10.1016/j.jcp.2021.110564},
}

@Article{WaXu21,
  author  = {Wei Wang and Xuejun Xu},
  title   = {A Two-Level Overlapping {Schwarz} Method with Subspace
             Correction for the {Stokes} Eigenvalue Problem},
  journal = {Journal of Scientific Computing},
  volume  = {86},
  number  = {2},
  pages   = {1--23},
  year    = {2021},
  doi     = {10.1007/s10915-020-01387-x},
}

@Article{Po09,
  author  = {Eric Polizzi},
  title   = {Density-Matrix-Based Algorithm for Solving Eigenvalue
             Problems},
  journal = {Physical Review B},
  volume  = {79},
  number  = {11},
  pages   = {115112},
  year    = {2009},
  doi     = {10.1103/PhysRevB.79.115112},
}

@Article{TaPo14,
  author  = {Ping Tak Peter Tang and Eric Polizzi},
  title   = {{FEAST} as a Subspace Iteration Eigensolver Accelerated by Approximate Spectral Projection},
  journal = {SIAM Journal on Matrix Analysis and Applications},
  volume  = {35},
  number  = {2},
  pages   = {354--390},
  year    = {2014},
  doi     = {10.1137/13090866X},
}

@Article{GuPoTaVi15,
  author  = {Stefan Guettel and Eric Polizzi and Ping Tak Peter Tang and Gautier Viaud},
  title   = {Zolotarev Quadrature Rules and Load Balancing for the {FEAST} Eigensolver},
  journal = {SIAM Journal on Scientific Computing},
  volume  = {37},
  number  = {4},
  pages   = {A2100--A2122},
  year    = {2015},
  doi     = {10.1137/140980090},
}

@Article{AuTr15,
  author  = {Anthony P. Austin and Lloyd N. Trefethen},
  title   = {Computing Eigenvalues of Real Symmetric Matrices with
             Rational Filters in Real Arithmetic},
  journal = {SIAM Journal on Scientific Computing},
  volume  = {37},
  number  = {3},
  pages   = {A1365--A1387},
  year    = {2015},
  doi     = {10.1137/140984129},
}

@Article{Bornemann11,
  author  = {Folkmar Bornemann},
  title   = {Accuracy and Stability of Computing High-Order Derivatives of
             Analytic Functions by {Cauchy} Integrals},
  journal = {Foundations of Computational Mathematics},
  volume  = {11},
  number  = {1},
  pages   = {1--63},
  year    = {2011},
  doi     = {10.1007/s10208-010-9075-z},
}

@Article{LiSaYa16,
  author  = {Lin Lin and Yousef Saad and Chao Yang},
  title   = {Approximating Spectral Densities of Large Matrices},
  journal = {SIAM Review},
  volume  = {58},
  number  = {1},
  pages   = {34--65},
  year    = {2016},
  doi     = {10.1137/130934283},
}

@Book{PaSa82,
  author    = {Beresford N. Parlett},
  title     = {The Symmetric Eigenvalue Problem},
  publisher = {SIAM},
  address   = {Philadelphia},
  year      = {1998},
  edition   = {2nd},
  doi       = {10.1137/1.9781611971163},
}

@Book{BaJePl05,
  author    = {Zhaojun Bai and James Demmel and Jack Dongarra and
               Axel Ruhe and Henk van~der~Vorst},
  title     = {Templates for the Solution of Algebraic Eigenvalue
               Problems: A Practical Guide},
  publisher = {SIAM},
  address   = {Philadelphia},
  year      = {2000},
  doi       = {10.1137/1.9780898719581},
}

@Article{SlVo96,
  author  = {Gerard L. G. Sleijpen and Henk A. Van~der~Vorst},
  title   = {A {Jacobi--Davidson} Iteration Method for Linear
             Eigenvalue Problems},
  journal = {SIAM Journal on Matrix Analysis and Applications},
  volume  = {17},
  number  = {2},
  pages   = {401--425},
  year    = {1996},
  doi     = {10.1137/S0895479894270427},
}

@Article{ZhHwCa07,
  author  = {Weichung Zhao and Feng-Nan Hwang and Xiao-Chuan Cai},
  title   = {Parallel Two-Level Domain Decomposition Based
             {Jacobi--Davidson} Algorithms for Computing a Few
             Low-Frequency Modes of Large Sparse Generalized
             Eigenvalue Problems},
  journal = {Computer Methods in Applied Mechanics and Engineering},
  volume  = {196},
  number  = {8},
  pages   = {1471--1490},
  year    = {2007},
  doi     = {10.1016/j.cma.2006.09.016},
}

@Book{GoVa13,
  author    = {Gene H. Golub and Charles F. Van~Loan},
  title     = {Matrix Computations},
  publisher = {The Johns Hopkins University Press},
  address   = {Baltimore},
  year      = {2013},
  edition   = {4th},
}

@article{xi2016computing,
  title={Computing partial spectra with least-squares rational filters},
  author={Xi, Yuanzhe and Saad, Yousef},
  journal={SIAM Journal on Scientific Computing},
  volume={38},
  number={5},
  pages={A3020--A3045},
  year={2016},
  publisher={SIAM}
}

@article{wang2026shifted,
  title={A shifted Laplace rational filter for large-scale eigenvalue problems},
  author={Wang, Biyi and Meerbergen, Karl and Vandebril, Raf and An, Hengbin and Mo, Zeyao},
  journal={Journal of Computational and Applied Mathematics},
  pages={117544},
  year={2026},
  publisher={Elsevier}
}

@book{saad2011numerical,
  title={Numerical methods for large eigenvalue problems: revised edition},
  author={Saad, Yousef},
  year={2011},
  publisher={SIAM}
}

@article{sakurai2003projection,
  title={A projection method for generalized eigenvalue problems using numerical integration},
  author={Sakurai, Tetsuya and Sugiura, Hiroshi},
  journal={Journal of computational and applied mathematics},
  volume={159},
  number={1},
  pages={119--128},
  year={2003},
  publisher={Elsevier}
}

@article{li2016thick,
  title={A thick-restart {L}anczos algorithm with polynomial filtering for {H}ermitian eigenvalue problems},
  author={Li, Ruipeng and Xi, Yuanzhe and Vecharynski, Eugene and Yang, Chao and Saad, Yousef},
  journal={SIAM Journal on Scientific Computing},
  volume={38},
  number={4},
  pages={A2512--A2534},
  year={2016},
  publisher={SIAM}
}

@inproceedings{ni2026adapolysi,
  title={AdaPolySI: Adaptive Polynomial Filtered Subspace Iteration for Hermitian Interior Eigenvalue Problems},
  author={Ni, Yuhui and Xu, Xiaofei and Li, Shengguo and Zhang, Juan and Chen, Juan and Wang, Jianchun and Roman, Jose E},
  booktitle={Proceedings of the 40th ACM International Conference on Supercomputing},
  pages={1259--1271},
  year={2026}
}

@book{golub2013matrix,
  title={Matrix computations},
  author={Golub, Gene H and Van Loan, Charles F},
  year={2013},
  publisher={JHU press}
}

@article{di2016efficient,
  title={Efficient estimation of eigenvalue counts in an interval},
  author={Di Napoli, Edoardo and Polizzi, Eric and Saad, Yousef},
  journal={Numerical Linear Algebra with Applications},
  volume={23},
  number={4},
  pages={674--692},
  year={2016},
  publisher={Wiley Online Library}
}

@article{xi2018fast,
  title={Fast computation of spectral densities for generalized eigenvalue problems},
  author={Xi, Yuanzhe and Li, Ruipeng and Saad, Yousef},
  journal={SIAM Journal on Scientific Computing},
  volume={40},
  number={4},
  pages={A2749--A2773},
  year={2018},
  publisher={SIAM}
}

@article{lee2018,
  author = {Lee, Seungwoo and Kwak, Do Young and Sim, Imbo},
  title = {Immersed Finite Element Method for Eigenvalue Problems in Elasticity},
  journal = {Advances in Applied Mathematics and Mechanics},
  year = {2018},
  volume = {10},
  number = {2},
  pages = {424-444},
  doi = {10.4208/aamm.OA-2016-0189},
  url = {https://doi.org/10.4208/aamm.OA-2016-0189}
}

@article{geevers2018dispersion,
  title={Dispersion properties of explicit finite element methods for wave propagation modelling on tetrahedral meshes},
  author={Geevers, Sjoerd and Mulder, Wim A and van der Vegt, Jaap JW},
  journal={Journal of scientific computing},
  volume={77},
  number={1},
  pages={372--396},
  year={2018},
  publisher={Springer}
}

@book{achenbach2012wave,
  title={Wave propagation in elastic solids},
  author={Achenbach, Jan},
  year={2012},
  publisher={Elsevier}
}

@article{xie2014multigrid,
  title={A multigrid method for eigenvalue problem},
  author={Xie, Hehu},
  journal={Journal of Computational Physics},
  volume={274},
  pages={550--561},
  year={2014},
  publisher={Elsevier}
}

@article{xu2026adaptive,
  title={Adaptive Polynomial Filtering for Hermitian Interior Eigenproblems: Convergence Analysis},
  author={Xu, Xiaofei and Ni, Yuhui and Li, Shengguo and Zhang, Juan},
  journal={arXiv preprint arXiv:2604.00914},
  year={2026}
}

@book{lehoucq1998arpack,
  title={ARPACK users' guide: solution of large-scale eigenvalue problems with implicitly restarted Arnoldi methods},
  author={Lehoucq, Richard B and Sorensen, Danny C and Yang, Chao},
  year={1998},
  publisher={SIAM}
}

@article{polizzi2020feast,
  title={{FEAST} eigenvalue solver v4. 0 user guide},
  author={Polizzi, Eric},
  journal={arXiv preprint arXiv:2002.04807},
  year={2020}
}

@article{amestoy2001fully,
  title={A fully asynchronous multifrontal solver using distributed dynamic scheduling},
  author={Amestoy, Patrick R and Duff, Iain S and L'Excellent, Jean-Yves and Koster, Jacko},
  journal={SIAM Journal on Matrix Analysis and Applications},
  volume={23},
  number={1},
  pages={15--41},
  year={2001},
  publisher={SIAM}
}
\end{document}